\DeclareMathOperator{\Rep}{Rep}
\DeclareMathOperator{\Lie}{Lie}
\DeclareMathOperator{\GL}{GL}
\DeclareMathOperator{\VHS}{VHS}
\DeclareMathOperator{\HS}{HS}
\DeclareMathOperator{\Immaginario}{Im}
\DeclareMathOperator{\Res}{Res}
\DeclareMathOperator{\interiore}{int}
\DeclareMathOperator{\Id}{Id}
\DeclareMathOperator{\pt}{pt}
\DeclareMathOperator{\SL}{SL}
\renewcommand{\Im}{\Immaginario}
\renewcommand{\int}{\interiore}
\renewcommand{\tilde}{\widetilde}
\newenvironment{npar}[1]{\begin{paragrafo_numerato_nome}\textbf{#1.}}{\end{paragrafo_numerato_nome}}
\newenvironment{np}{\begin{paragrafo_numerato}}{\end{paragrafo_numerato}}
\newtheoremstyle{theorem}{11pt}{11pt}{\itshape}{}{\bfseries}{.}{.5em}{}
\newtheoremstyle{note}{11pt}{11pt}{}{}{\bfseries}{.}{.5em}{}
\theoremstyle{note}
\newtheorem{defin}{Definition}[section]
\newtheorem{rem}[defin]{Remark}
\newtheorem{paragrafo_numerato}[defin]{}
\newtheorem{paragrafo_numerato_nome}[defin]{}
\theoremstyle{theorem}
\newtheorem{thm}[defin]{Theorem}
\newtheorem{cor}[defin]{Corollary}
\newtheorem{lem}[defin]{Lemma}
\newtheorem{prop}[defin]{Proposition}
\newcommand{\G}{\mathbb{G}}
\newcommand{\Q}{\mathbb{Q}}
\newcommand{\R}{\mathbb{R}}
\newcommand{\D}{\mathbb{S}}
\newcommand{\C}{\mathbb{C}}
\newcommand{\isocan}{\xrightarrow{\hspace{1.85pt}\sim \hspace{1.85pt}}}
\numberwithin{equation}{section}
\begin{document}

\title[Degeneration of Hodge structures over Picard modular surfaces]{Degeneration of Hodge structures over Picard modular surfaces}

\author{GIUSEPPE ANCONA}
\address{Universit\"at Zurich, Winterthurerstr. 190, CH-8057 Zurich}
\email{giuseppe.ancona@math.uzh.ch}
\urladdr{http://user.math.uzh.ch/ancona/}

\begin{abstract}
We study variations of Hodge structures over a Picard modular surface, and compute the weights and types of their degenerations through the cusps of the Baily-Borel compactification. These computations are one of the key inputs which allow Wildeshaus  \cite{WildPic} to construct motives associated with Picard modular forms. 
\end{abstract}

\maketitle

\keywords{\textit{Keywords}: Picard surfaces; Shimura varieties; Hodge structures; motives; modular forms.}

Mathematics Subject Classification 2010: 11F55, 11F75, 11G15, 14D07, 14J10.

\section{Introduction}

\begin{np}\label{eravamo} In \cite{SchMod}, Scholl constructed motives associated with modular forms, lifting Deligne's construction of Galois representations attached to modular forms \cite{Del}. This is done by studying the cohomology and the motive of modular curves, together with the universal family of elliptic curves over them. 

The Galois representations constructed by Deligne (as well as the motives constructed by Scholl) are \textit{pure}. This is essential, for example, for Deligne's proof of Ramanujan's conjecture on the $\tau$-function \cite[Theorem 5.6]{Del}. On the other hand these modular forms live in the cohomology of varieties that are smooth but not compact. Hence to understand purity one has to deal with  the \textit{boundary cohomology}.

A program, initiated by Wildeshaus in \cite{Wild3}, aims to generalize such constructions to all PEL Shimura varieties. Our contribution here is to compute the boundary cohomology in the case of \textit{Picard modular surfaces}. This is the key ingredient that allows Wildeshaus  to construct motives associated with Picard automorphic forms \cite[Theorem 3.8 and Theorem 5.6]{WildPic} (see also Remark \ref{duepesi}).

\end{np}

\begin{np} Picard modular surfaces  are the moduli spaces of abelian varieties of dimension $3$ endowed with an action of an order of an imaginary quadratic field (as well as some other additional structures).

Fix a Picard modular surface $S$ and let $S^*$ be its Baily-Borel compactification (the boundary consists of a finite number of points). Consider
$j:S \hookrightarrow S^*$
the canonical open immersion and 
$i:\pt \hookrightarrow S^*$ the inclusion of one fixed point of the boundary.  Let  $A$ be  the universal abelian scheme over $S$ (which is of relative dimension $3$), and take the $r$-fold fiber product $f:A^r \longrightarrow S.$

\end{np}
\begin{thm}\label{peso abeliano}

 With the above notations the following holds:
\begin{enumerate}
\item the $\Q$-Hodge structure  $i^*R^0j_* R^pf_*\Q_{A^r}$ has weights $\{p-t\}_{c_p\leq t\leq C_p}$,
\item the $\Q$-Hodge structure $i^*R^1j_* R^pf_*\Q_{A^r}$ has weights  $\{p+1-t\}_{0 \leq t\leq M_p}$,
\item the $\Q$-Hodge structure $i^*R^2j_* R^pf_*\Q_{A^r}$ has weights  $\{p+3+t\}_{0 \leq t\leq M_p}$,
\item the $\Q$-Hodge structure $i^*R^3j_* R^pf_*\Q_{A^r}$ has weights  $\{p+4+t\}_{c_p \leq t\leq C_p}$,
\item the $\Q$-Hodge structure $i^*R^kj_* R^pf_*\Q_{A^r}$ vanishes for $k\geq 4$,
\end{enumerate}
where
\[\begin{array}{c@{=}l}c_p & \left\{\begin{array}{ll} 1 & \textrm{for} \hspace{0.5cm}p=1,6r-1 \\
0 & \textrm{otherwise}
\end{array} \right. \\
C_p & \min\{p,2r,|6r-p|\} \\
M_p & \left\{\begin{array}{ll} p  & \textrm{for} \hspace{0.5cm}  p\leq r \\ r+[\frac{p-r}{2}] & \textrm{for} \hspace{0.5cm}  r <p\leq 3r \\  r+[\frac{5r-p}{2}] & \textrm{for} \hspace{0.5cm}  3r< p\leq 5r \\  6r-p & \textrm{for} \hspace{0.5cm}   5r<p\leq 6r \end{array}\right.   
\end{array}\]
\end{thm}
Note that the theorem above deals with the range $p \leq 6r$ as for $p > 6r$ the sheaf $R^pf_*\Q_{A^r}$ vanishes.
\begin{np}
Having in mind the application to modular forms explained in \S\ref{eravamo}, we have to deal, not only with the whole relative cohomology of the universal abelian scheme, but also with interesting direct factors. To do so, we consider the "canonical construction" functor (\S \ref{notation mu})
\[\mu_S:\Rep_{G,\Q} \longrightarrow \VHS(S)_{\Q},\] from  $\Q$-representations of the group $G$ of the Shimura datum underlying $S$ to (admissible) variations of $\Q$-Hodge structures over $S$.
The main result of this paper is the computation of the weights of the $\Q$-Hodge structure
\[i^*R^kj_*\mu_S(F),\]
for all integers $k$ and all $G$-representations $F$.

To compute these weights one can extend the scalars from $\Q$ to $\C$. Then, as the functors $i^*R^kj_*\mu_{S,\C}$ are linear, it is enough to consider irreducible representations of $G_{\C}$.  In our case, there is an isomorphism of $\C$-algebraic groups ${G_{\C}\cong \GL_3\times \mathbb{G}_m,}$
so that irreducible representations correspond to lists of four integers $(a,b,c,d)$ such that $a\geq b \geq c$ (such a list is called \textit{maximal weight} of the representation). 

The following is our main result (see Theorem \ref{thm final} in the text) and we will deduce Theorem \ref{peso abeliano} from it.
\end{np}
\begin{thm}\label{ancora}
Let $F_{\lambda}$ be the irreducible representation of $G_{\C}$ of maximal weight $\lambda=(a,b,c,d)$. Then the following holds:
\begin{enumerate}
\item $i^*R^0j_*\mu_{S,\C}(F_{\lambda})$ is of weight $-2a-b-2d$,
\item $i^*R^1j_*\mu_{S,\C}(F_{\lambda})$ has weights $-a-2b-2d+1$ and $-2a-c-2d+1$,
\item $i^*R^2j_*\mu_{S,\C}(F_{\lambda})$ has weights $-a-2c-2d+3$ and $-2b-c-2d+3$,
\item $i^*R^3j_*\mu_{S,\C}(F_{\lambda})$ is of weight $-b-2c-2d+4$,
\item $i^*R^kj_*\mu_{S,\C}(F_{\lambda})$ vanishes for $k\geq 4$.
\end{enumerate}
\end{thm}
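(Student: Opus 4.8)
The plan is to run the canonical construction through the theorem of Burgos and Wildeshaus \cite{BW}, which computes the degeneration $i^*R^kj_*\mu_{S,\C}(F)$ at a cusp in terms of the Lie algebra cohomology of the unipotent radical of the parabolic attached to that cusp, together with a Hodge structure coming from the boundary component of the Shimura datum. The first step is to pin down this parabolic. The group underlying $S$ is $\mathrm{GU}(2,1)$, which has real rank one; hence there is a single conjugacy class of proper parabolic subgroups $P$, its Levi $L$ has one--dimensional split centre, and its unipotent radical is the three--dimensional Heisenberg group. Over $\C$ the identification $G_\C\cong\GL_3\times\G_m$ realises $P_\C$ as a Borel subgroup: the $\G_m$--factor is central and contributes no root, while the restricted roots $\alpha$ (of multiplicity $2$) and $2\alpha$ (of multiplicity $1$) of $\mathrm{GU}(2,1)$ lift to the three positive roots of $\GL_3$. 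Consequently the complexified Lie algebra $\mathfrak{n}=\Lie U_\C$ of the unipotent radical is the nilradical of a Borel of $\GL_3$, with $\dim\mathfrak{n}=3$, and \cite{BW} produces, for each $k$, a pure Hodge structure built from $H^k(\mathfrak{n},F_\lambda)$ regarded as a representation of $L$.

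The second step is Kostant's theorem. Since $\mathfrak{n}$ is the nilradical of a Borel, the relevant Weyl group is the full symmetric group $\mathfrak{S}_3$, and
\[H^k(\mathfrak{n},F_\lambda)=\bigoplus_{w\in\mathfrak{S}_3,\ \ell(w)=k}\C_{w\cdot\lambda},\qquad w\cdot\lambda=w(\lambda+\rho)-\rho,\]
each summand being one--dimensional because $L_\C$ is a torus. The number of elements of $\mathfrak{S}_3$ of length $0,1,2,3$ is $1,2,2,1$, which already accounts for the number of weights appearing in parts (1)--(4); and since $\dim\mathfrak{n}=3$ the cohomology vanishes for $k\geq 4$, giving part (5).

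It then remains to compute the weight of each line $\C_{w\cdot\lambda}$. By \cite{BW} this weight is the value on $w\cdot\lambda$ of the weight cocharacter of the boundary Shimura datum, which in the coordinates $(a,b,c,d)$ is the pairing with $(-2,-1,0)$ on the $\GL_3$--part together with $-2$ on the central $\G_m$--factor. Writing out $w\cdot\lambda=w(\lambda+\rho)-\rho$ with $\rho=(1,0,-1)$ for the six elements of $\mathfrak{S}_3$ and pairing against this cocharacter is then a direct computation: for $w=\mathrm{id}$ one finds $-2a-b-2d$; the two length--one reflections give $-a-2b-2d+1$ and $-2a-c-2d+1$; the two length--two elements give $-a-2c-2d+3$ and $-2b-c-2d+3$; and the longest element gives $-b-2c-2d+4$. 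These match the claimed weights exactly, the constants $+1,+3,+4$ being produced entirely by the $\rho$--shift in the dot action.

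The main obstacle is this last step, namely extracting from \cite{BW} the precise Hodge-theoretic normalisation: one must identify the limiting weight cocharacter with the explicit character whose coordinates are $(-2,-1,0;-2)$, and verify that no residual Tate twist depending on $k$ survives, so that the additive constants in the final weights come solely from $\rho$. A secondary point to check carefully is the comparison, via Nomizu's theorem, between the cohomology of the arithmetic nilmanifold occurring in \cite{BW} and the Lie algebra cohomology $H^k(\mathfrak{n},F_\lambda)$, together with the fact that the real Heisenberg nilradical complexifies to the Borel nilradical of $\GL_3$, so that Kostant's theorem is indeed applicable in the form used above.
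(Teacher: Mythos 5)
Your strategy is exactly the paper's: reduce via the Burgos--Wildeshaus theorem \cite{BW} to the cohomology of the unipotent radical $W_D$ of the parabolic attached to the cusp, apply Kostant's theorem (the parabolic $Q_{D,\C}$ is indeed a Borel of $G_\C\cong\GL_3\times\G_m$, with Levi quotient a torus, so the Kostant summands are lines), and then evaluate each line $\C_{w\cdot\lambda}$ against the weight cocharacter of the boundary Shimura datum. Your structural identifications (rank one, three-dimensional Heisenberg radical complexifying to the Borel nilradical) and your dot-action computation with $\rho=(1,0,-1)$ are correct, and pairing against $(-2,-1,0;-2)$ does reproduce all six stated weights.

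However, the step you yourself call ``the main obstacle'' is a genuine gap, and it is precisely where the paper does most of its work. That the boundary Hodge structure acts on a character $(a,b,c,d)$ of $Q_{D,\C}/W_{D,\C}\cong\G_{m,\C}^4$ with weight $-2a-b-2d$ is not a formal consequence of \cite{BW}: it requires the explicit determination of the morphism $h_{\mathcal{B},\infty}=\omega_{w_1,w_2,w_3}\circ h_\infty$ attached to the cusp in Pink's formalism. In the paper this occupies Lemma \ref{lemma} (pinning down $\omega_{w_1,w_2,w_3}$ by its defining properties, including the conjugacy of cocharacters inside $Q_D(\C)$), Definition \ref{boundary morph}, and the computation in \ref{res1} that $\phi_{\mathcal{B}}\circ\overline{h_{\mathcal{B},\infty}}$ sends $(z_1,z_2)$ to $(z_1z_2,z_1,1,z_1z_2)$; this simultaneously yields the full types (not only the weights) and shows that no $k$-dependent Tate twist survives, which is exactly what certifies your constants $+1,+3,+4$. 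Asserting the cocharacter $(-2,-1,0;-2)$ without this derivation leaves the decisive normalisation unproved. A second, smaller omission: the clean form of \cite[Theorem 2.9]{BW} you invoke, with no arithmetic-group cohomology factor, is valid here only because every neat arithmetic subgroup of $Q_D/W_D(\Q)$ is trivial --- the paper's Lemma \ref{arit triv}, proved by exhibiting $Q_D/W_D$ as isogenous to the product of a compact torus and a split torus. Your remark about Nomizu's theorem concerns the unipotent fibre, which \cite{BW} already handles, but does not address this Levi contribution.
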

\begin{cor}\label{cor intro}
Let us keep the notations from the theorem above and write $w$ for the weight of the variation of pure Hodge structures $\mu_{S,\C}(F_{a,b,c,d})$. Suppose that $F_{\lambda}$ is \textit{regular} (i.e. $a\neq b \neq c$). Then, for each non-negative integer $k$, the integers $k+w$ and $k+w+1$ are not weights of $i^*R^kj_*\mu_{S,\C}(F_{\lambda})$.
\end{cor}
\begin{rem}\label{duepesi}
Let us explain in more details the connection of Corollary \ref{cor intro} with the construction of motives associated with modular forms (the reader can alternatively consult the introduction of \cite{WildPic}). 

Let $S=Sh^K(G,X)$ be a PEL Shimura variety and $A$ the universal abelian scheme over it. By \cite[Theorem 8.6]{Anc}, for any $G$-representation $F_{\lambda}$, the variation of Hodge structures $\mu_S(F_{\lambda})$ over $S$ is the realization of a canonical relative Chow motive over $S$  of the form $M_S(A^r,p,s)$ for some integers $r$ and $s$. Here $A^r$ is the $r$-fold product of $A$ over $S$, $p$ is a projector and,  $(s)$ is the Tate twist. 

By  \cite[Theorem 2.2]{Wild5}, the projector $p$ induces a direct factor $M (A^r)^p$ of the mixed motive $M (A^r)$ of the smooth and non-projective variety $A^r$, as well as a direct factor $\partial M (A^r)^p$ of the boundary motive $\partial M (A^r)$. 

The mixed motive $M (A^r)^p$  is endowed with an action of the Hecke algebra. In order to construct motives associated with modular forms one would like to have a pure motive which is the best "pure approximation" of $M (A^r)^p$ and which is still endowed with an action of the Hecke algebra, then use its action  to cut pure smaller parts.

 By \cite[Theorem 4.3]{Wild3}, this is possible if the boundary motive $\partial M (A^r)^p(s)$ \textit{avoids weights $-1$ and $0$}.  This  weight condition however cannot be satisfied by all $F_{\lambda}$. Clearly, a necessary condition is to check this on realizations. By \cite[Theorem 1.2, Proposition 3.5]{Wild4} this corresponds to the fact that, for each non-negative integer $k$, the integers $k+\overline{w}$ and $k+\overline{w}+1$ are not weights of $i^*R^kj_*\mu_{S,\C}(F_{\lambda})$ (with $\overline{w}$ being the weight of  $\mu_{S,\C}(F_{\lambda})$). 
In the case of Picard surfaces, Corollary \ref{cor intro} shows that this condition is satisfied for all regular representations.

 Again in the Picard case, it is shown in \cite{WildPic} that this control of weights in cohomology is actually sufficient to control
 the motivic weights (i.e., for Picard surfaces, conservativity of the realization can be shown, see \cite[Theorem 3.8]{WildPic}). In particular, one can deduce  canonical pure motives associated with regular representations \cite[Corollary 3.9]{WildPic}, and from them, by using the action of the Hecke algebra, one can cut out motives attached to Picard automorphic forms  \cite[Theorem 5.6]{WildPic}.

\end{rem}
\begin{np}
The main tool in the proof of Theorem \ref{ancora} is a theorem of Burgos and Wildeshaus \cite{BW}, which works for a general Shimura variety and reduces the computation of weights of degenerations to a computation of cohomology of groups. 
In our special case the groups involved are concrete enough to allow us to explicitly perform all the required computations.

Note that one could alternatively have used \cite[Theorem 4.3.14]{HZ} instead of \cite{BW} as starting point for our computations.
\end{np}

\begin{npar}{Organization of the paper}
In Section 2 we recall generalities about Picard surfaces and their Shimura datum. The standard reference is \cite{Gor}. 

In Section 3 we describe the boundary of Baily-Borel and toroidal compactifications, as well as the Shimura datum underlying the strata of these compactifications.

 Section 4 contains the main results concerning the degeneration of Hodge structures. 
\end{npar}
\begin{npar}{Notations and convention}\label{convention intro}
We will write $\mathbb{A}_f$ for the ring of finite adeles. We will write 
$\D = \Res_{\C/\R}\mathbb{G}_{m,\C}$
for the $\R$-algebraic group called \textit{Deligne's torus}. Here $\Res_{\C/\R}$ is the Weil restriction of scalars.

When $L\supset K$ are fields and $X$ is an object defined over $K$ (for instance a variety or a group), we will write $X_L$ for its base change to $L$.
\end{npar}

\section{Picard datum}
In this section we recall the definition of a Picard datum $(G,X)$; the standard reference is \cite{Gor}. This is a Shimura datum of PEL type. We first construct the $\Q$-algebraic group $G$, which after extending scalars  becomes the direct product of a linear group by the multiplicative group (Remark \ref{extension}). 

\begin{npar}{Notation}\label{basic}
Let $E$ an imaginary quadratic field embedded in $\C$, and $n$  the only positive square-free integer  such that \[E=\Q(i\sqrt{n}).\]
Let $V=V_E$ be an $E$-vector space of dimension $3$, and $J$ be a hermitian form of signature $(2,1)$. In particular, one can find three orthogonal vectors $v_1,v_2,v_3 \in V$ such that $J(v_1,v_1)$ and $ J(v_2,v_2)$ are positive and $J(v_3,v_3)$ is negative.
\end{npar}
\begin{defin}\label{defin gp}
In the context of Notation \ref{basic} we define 
$G=GU(V,J)$
to be the $\Q$-algebraic group of the $E$-linear automorphisms of $V$ which respect the hermitian form $J$ up to scalar.
\end{defin}
\begin{rem}\label{extension}
Let $V_{\Q}$ be $V$ viewed as a $\Q$-vector space. Then the algebra $E \otimes E$ acts on the $E$-vector space $V_{\Q}\otimes_{\Q}E$, and hence induces a decomposition
\[V_{\Q}\otimes_{\Q}E=V_i \oplus V_{-i},\]
where \hspace{1cm} $V_i=\{w\in V_{\Q}\otimes_{\Q}E, \hspace{0.2cm} (a\otimes 1)w=(1\otimes a)w \hspace{0.2cm} \forall a \in E\}, $ \newline
and \hspace{1.1cm} 
$V_{-i}=\{w\in V_{\Q}\otimes_{\Q}E, \hspace{0.2cm} (a\otimes 1)w=(1\otimes \overline{a})w \hspace{0.2cm} \forall a \in E\}.$

Let $\mathcal{B}=\{v_1,v_2,v_3\}$ be an $E$-basis of $V$; and write 
$\pi_i: V_{\Q}\otimes_{\Q}E\longrightarrow V_i $
for the projector whose kernel is $V_{-i}$. Then
\[ \mathcal{B}_i=\{\pi_i(v_1),\pi_i(v_2),\pi_i(v_3)\} \]
is a basis of $V_i$ and one has an isomorphism
\[\phi_{\mathcal{B}}: G_E \isocan \GL_{3,E}\times \mathbb{G}_{m,E}\]
given by $g \mapsto (g_{|V_i}, \chi_g),$
where the restriction $g_{|V_i}$ of $g$ to $V_i$ is written in the basis $\mathcal{B}_i$ and $\chi_g$ is the scalar such that
$J(g \cdot, g \cdot)= \chi_g J( \cdot,  \cdot).$
\end{rem}
\begin{npar}{Convention}\label{convention couple}
Let $\mathcal{B}=\{v_1,v_2,v_3\}$ be an $E$-basis of $V$. Then \[\tilde{\mathcal{B}}=\{v_1,i\sqrt{n}v_1, v_2,i\sqrt{n}v_2,v_3,i\sqrt{n}v_3\}\] is a $\Q$-basis of $V$. For any $\Q$-algebra $R$, and any $R$-point $g\in G(R)$, we  will write 

\[g=\left(\begin{array}{ccc}

   (a_{11},b_{11}) & (a_{12},b_{12})                 & (a_{13},b_{13})         \\
   (a_{21},b_{21})                                        & (a_{22},b_{22})  & (a_{23},b_{23})         \\
   (a_{31},b_{31})                    & (a_{32},b_{32})                 & (a_{33},b_{33})

\end{array}
\right),\]
where $a_{jk}\in R$ is the coordinate of  $g(v_k)$ with respect to $v_j$ and $b_{jk}\in R$ is the coordinate of  $g(v_k)$ with respect to $i\sqrt{n}v_j$. Note that, by definition of $G$, we have $g(i\sqrt{n}v_k)= i\sqrt{n}g(v_k),$ in particular the $a_{jk}\in R$ and $b_{jk}\in R$ determine $g$.

\end{npar}
\begin{defin}\label{datum} Let  $\mathcal{B}=\{v_1,v_2,v_3\}$ be an $E$-basis of $V$ such that $J(v_1,v_1)$ and $ J(v_2,v_2)$ are positive and $J(v_3,v_3)$ is negative (see Notation \ref{basic}). Consider the morphism of algebraic groups
$h_{v_1,v_2,v_3}:\D \longrightarrow G_{\R}$
given by
\[h_{v_1,v_2,v_3} : (z_1,z_2) \mapsto \left(\begin{array}{ccc}

   (\frac{z_1+z_2}{2},\frac{z_1-z_2}{2i\sqrt{n}}) & (0,0)                   & (0,0)          \\
   (0,0)                                          & (\frac{z_1+z_2}{2},\frac{z_1-z_2}{2i\sqrt{n}})   & (0,0)          \\
   (0,0)                      & (0,0)                   & (\frac{z_1+z_2}{2},\frac{z_2-z_1}{2i\sqrt{n}})

\end{array}
\right),\]
where $G$ is the group of Definition \ref{defin gp} and the morphism is written in the basis $\mathcal{B}$ using Convention \ref{convention couple}. We will write $X$ for the topological space of the $G(\R)$-conjugacy class of the morphism $h_{v_1,v_2,v_3}$.
\end{defin}

\begin{defin}\label{defin datum}
The pair $(G,X)$ will be called a Picard datum. When $K\subset G(\mathbb{A}_f)$ is a \textit{neat} subgroup \cite[\S0.5]{Pi} we will write 
$S=Sh^K(G,X)$
for the induced Shimura variety. It is a complex\footnote{In fact, it has a (canonical) model over the imaginary quadratic field $E$ \cite{Gor}.}, smooth and quasi-projective surface which we call a "Picard modular surface"; see \cite{Gor} for details and proofs. 
\end{defin}
\begin{rem}\label{universal}
The Picard modular surface $S$ is the fine moduli space of polarised abelian varieties of dimension $3$ endowed with an action of an order of $E$ and some additional structures (depending also on $K$). In particular, there is a universal abelian scheme 
$f:A \rightarrow S;$
see \cite{Gor} for details and proofs.
\end{rem}

\section{Compactifications and boundary}
The aim of this section is to describe the Shimura data underlying the strata of the boundary of the Baily-Borel and toroidal compactifications of a Picard modular surface $S$ (Definition \ref{defin datum}).  These strata are associated to parabolic subgroups of the group $G$ introduced in Definition \ref{defin gp} (for generalities on strata of compactifications of Shimura varieties see \cite[Chapter 4]{Pi}). We start by describing these parabolic subgroups (\ref{zeroline}-\ref{parabolic}), then the Shimura datum associated to each stratum (\ref{cocharacter}-\ref{data bound}) and deduce the geometry  of the boundary (\ref{geom bound}-\ref{stover}).
 
 \begin{lem}\label{zeroline}
 Let $(V,J)$ be as in Notation \ref{basic}. Then there exist infinitely many isotropic vectors in $V$. Moreover if  $\tilde{v}$ be any non-zero isotropic vector, then there exists a positive rational number $b$ and an isomorphism
 $(V,J) \isocan (E^3, J_b)$
 sending $\tilde{v}$ to the first vector of the canonical base of $E^3$. Here $J_b$ is the 
  hermitian form
 \[ J_b =\left(\begin{array}{ccc}
0 & 0 & 1\\
0 & b & 0 \\
1 & 0 & 0
\end{array}\right) .\]
 \end{lem}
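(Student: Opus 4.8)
The plan is to treat the two assertions separately: the first (existence) rests on a global arithmetic input, whereas the second (normal form) is a Witt-type decomposition that I can carry out by hand over $E$.

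For the existence of isotropic vectors, I would pass to the quadratic form over $\Q$ attached to $J$. Writing $E=\Q(i\sqrt{n})$, every $x=p+q\,i\sqrt{n}\in E$ has norm $N(x)=x\overline{x}=p^2+nq^2$, so in the orthogonal basis $v_1,v_2,v_3$ of Notation \ref{basic} (with $d_i:=J(v_i,v_i)$, $d_1,d_2>0>d_3$) the $\Q$-valued form $v\mapsto J(v,v)$ becomes the rank $6$ quadratic form $Q=\sum_i d_i(p_i^2+nq_i^2)$ on the $\Q$-vector space $V_{\Q}$. Over $\R$ this has signature $(4,2)$, hence is indefinite in $6\geq 5$ variables; by Meyer's theorem (a case of Hasse--Minkowski) it represents $0$ nontrivially. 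A nontrivial zero of $Q$ is exactly a nonzero $v\in V$ with $J(v,v)=0$, and already its $E^{\times}$-multiples provide infinitely many isotropic vectors. The arithmetic content of the lemma lives entirely in this step; everything that follows is linear algebra over $E$.

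For the normal form, fix a nonzero isotropic vector $\overline{v}$ and set $w_1:=\overline{v}$. Since $J$ is nondegenerate there is $u$ with $J(w_1,u)\neq 0$, and after scaling $u$ by a suitable element of $E^{\times}$ I may assume $J(w_1,u)=1$. I then correct by an isotropic shift: putting $w_3:=u+cw_1$ and choosing $c\in E$ with $\mathrm{Tr}_{E/\Q}(c)=-J(u,u)$ (possible since $J(u,u)\in\Q$), one computes $J(w_3,w_3)=J(u,u)+\mathrm{Tr}_{E/\Q}(c)=0$ while $J(w_1,w_3)=1$ is unchanged, so $H=\langle w_1,w_3\rangle$ is a hyperbolic plane.

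Finally, because $H$ is nondegenerate we have $V=H\perp H^{\perp}$ with $H^{\perp}=\langle w_2\rangle$ a hermitian line; set $b:=J(w_2,w_2)\in\Q^{\times}$. Counting real signatures, $H$ contributes $(1,1)$ while the total is $(2,1)$, which forces $H^{\perp}$ to be positive definite, i.e. $b>0$. In the basis $\{w_1,w_2,w_3\}$ the Gram matrix of $J$ is exactly $J_b$, so the map sending $w_1\mapsto e_1,\ w_2\mapsto e_2,\ w_3\mapsto e_3$ is an isometry $(V,J)\isocan(E^3,J_b)$ carrying $\overline{v}$ to the first vector of the canonical basis. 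The only genuine obstacle is the existence assertion, which cannot be obtained by pure linear algebra and needs the Hasse--Minkowski/Meyer input; the hyperbolic completion and the signature bookkeeping that yields $b>0$ are routine.
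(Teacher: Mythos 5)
Your proof is correct and takes essentially the same route as the paper: the existence of isotropic vectors is reduced to a rational quadratic form in six variables with four positive and two negative coefficients and settled by Meyer's theorem (the paper cites exactly this result from Serre), while the normal form part is the ``basic linear algebra'' that the paper leaves unspecified. Your hyperbolic-plane construction via the isotropic shift $w_3=u+cw_1$ and the signature count forcing $b>0$ simply supply the details the paper omits.
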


\begin{proof}
For the first part, let us diagonalize the hermitian form $J$. Then we have to look for rational solutions of an equation of the form $\sum_{i=1}^{6}c_it_i^2=0$
with $c_i$ integers, four positive and two negative. This indeed has a solution (and thus infinitely many) by \cite[corollaire 2, p. 77, chap. 4]{Se}. The rest is standard.
\end{proof}
\begin{defin}\label{parabolic basis}
Let $D$ be an isotropic $E$-line of $V$. An $E$-basis $w_1,w_2,w_3$ of $V$ is called a \textit{parabolic basis} adapted to $D$ if $w_1$ generates $D$, and the matrix representing $J$ in this basis is of the form $J_b$ for some $b$ (following notations of Lemma \ref{zeroline}).
\end{defin}
\begin{prop}\label{parabolic}
Let $D$ be an isotropic $E$-line of $V$ (see Lemma \ref{zeroline}), and  define $Q_D$ to be the subgroup of $G$ stabilizing $D$. Then 
\[Q_D=G \cap 
\left\{\left(\begin{array}{ccc}
(a_{11},b_{11}) & (a_{12} , b_{12})& (a_{13} , b_{13})\\
(0,0) & (a_{22} , b_{22}) & (a_{23} , b_{23}) \\
(0,0)& (0,0) & (a_{33} , b_{33})
\end{array}\right)\right\},
\]
where the coordinates are written using Convention \ref{convention couple} and we are using a parabolic basis adapted to $D$ (Definition \ref{parabolic basis}). The unipotent radical of $Q_D$ is 
\[R_u(Q_D)=
\left\{\left(\begin{array}{ccc}
(1,0) & (-ba_{23} , bb_{23})& (-\frac{b}{2}(a_{23}^2 + nb_{23}^2) , b_{13})\\
(0,0) & (1 , 0) & (a_{23} , b_{23}) \\
(0,0)& (0,0) & (1 , 0)
\end{array}\right)\right\},\] with Lie algebra 
\[\Lie \, R_u(Q_D)=
\left\{\left(\begin{array}{ccc}
(0,0) & (-ba_{23} , bb_{23})& (0, b_{13})\\
(0,0) & (0,0) & (a_{23} , b_{23}) \\
(0,0)& (0,0) & (0,0)
\end{array}\right)\right\}.\]

The torus
\[T_{m,D} =
\left\{   \left(  \begin{array}{ccc}
 (\frac{\lambda_1+\lambda_2}{2},\frac{\lambda_1-\lambda_2}{2i\sqrt{n}}) & (0,0) & (0,0) \\
   (0,0) & (\frac{\lambda_3+\lambda_4}{2},\frac{\lambda_3-\lambda_4}{2i\sqrt{n}}) & (0,0) \\
   (0,0)& (0,0) & (\frac{\frac{\lambda_3\lambda_4}{\lambda_2}+\frac{\lambda_3\lambda_4}{\lambda_1}}{2},\frac{\frac{\lambda_3\lambda_4}{\lambda_2}-\frac{\lambda_3\lambda_4}{\lambda_1}}{2i\sqrt{n}})
   \end{array}\right)\right\}\]
is a maximal torus of $G$ defined over $\C$, and the torus
\[T_D =
\left\{  \mu \cdot  \left(  \begin{array}{ccc}
(\lambda,0) & (0,0)& (0,0)\\
(0,0) & (1,0)& (0,0) \\
(0,0)& (0,0) & (\lambda^{-1},0)
\end{array}\right)\right\}\]
is a maximal split torus defined over $\Q$.  There is only one Borel subgroup $B_D$ of  $G$ such that
$Q_D \supseteq B_D \supseteq T_D$ and it is $Q_D$ itself.

Moreover $Q_D$ is an  admissible parabolic subgroup of $G$ in the sense of \cite[Definition 4.5]{Pi} and the admissible parabolic subgroups are exactly those subgroups of the form $Q_{D'}$ for some isotropic $E$-line $D'$ of $V$. 
\end{prop}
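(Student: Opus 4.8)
The plan is to handle the six assertions one at a time, reducing each to explicit linear algebra for the standardized pair $(E^3, J_b)$ furnished by Lemma \ref{zeroline}, and to invoke Pink's formalism only for the final admissibility claim. \emph{Shape of $Q_D$.} First I would fix a parabolic basis $w_1, w_2, w_3$ adapted to $D$ (Definition \ref{parabolic basis}), so that $J$ is given by $J_b$ and $D = \langle w_1 \rangle$. An element $g \in G$ lies in $Q_D$ exactly when $g(D) = D$; since $g(w_1)$ is then a multiple of $w_1$, the first column below the diagonal vanishes. As $g$ multiplies $J$ by the scalar $\chi_g$, it also preserves the orthogonal complement $D^\perp$; reading off $J_b$ gives $D^\perp = \langle w_1, w_2 \rangle$, so $g(w_2) \in \langle w_1, w_2 \rangle$ and the $(3,2)$-entry vanishes as well. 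This yields the upper-triangular shape, and conversely every upper-triangular element of $G$ stabilizes $D$.

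\emph{Unipotent radical.} The subgroup $R_u(Q_D)$ is the kernel of the action of $Q_D$ on the graded pieces of the flag $0 \subset D \subset D^\perp \subset V$, i.e. the unipotent upper-triangular elements of $G$. Writing such an element with off-diagonal $E$-entries $p, q, r$ in positions $(1,2), (1,3), (2,3)$ and imposing $J(g\cdot, g\cdot) = J(\cdot, \cdot)$ (so $\chi_g = 1$), the pairings $J(w_2, w_3)$ and $J(w_3, w_3)$ produce exactly the two relations $p = -b\overline{r}$ and $\Re(q) = -\tfrac{b}{2}|r|^2$, while the remaining pairings hold automatically. Substituting $r = a_{23} + i\sqrt{n}\,b_{23}$ and writing the free imaginary part of $q$ as $b_{13}$ via Convention \ref{convention couple} reproduces the stated matrix; linearizing (so the quadratic term of the $(1,3)$-entry drops out) gives $\Lie R_u(Q_D)$.

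\emph{Tori and the Borel.} For $T_{m,D}$ and $T_D$ I would substitute the displayed diagonal elements into the similitude condition: writing each diagonal entry through its two components on $V_i$ and $V_{-i}$ (Remark \ref{extension}), the three relations coming from $J(w_1,w_3)$, $J(w_2,w_2)$ and $J(w_3,w_1)$ express the third entry through the other two and $\chi_g$. This confirms membership in $G$, that $T_{m,D}$ has dimension $4$, the absolute rank of $G$ (hence is maximal over $\C$), and that $T_D$ has dimension $2$, the $\Q$-split rank (Witt index $1$ together with the similitude factor), hence is a maximal $\Q$-split torus. For the Borel I would apply the isomorphism $\phi_{\mathcal{B}}$ of Remark \ref{extension} to the parabolic basis $\mathcal{B}$: it carries $Q_{D,\C}$ onto the standard upper-triangular Borel of $\GL_{3,\C} \times \G_{m,\C}$ and $T_{m,D}$ onto the diagonal torus, so $Q_{D,\C}$ is itself a Borel. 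Since $T_D \subseteq T_{m,D} \subseteq Q_D$, any Borel $B_D$ with $Q_D \supseteq B_D \supseteq T_D$ satisfies $B_{D,\C} \subseteq Q_{D,\C}$, and a Borel contained in a Borel coincides with it; this gives both uniqueness and $B_D = Q_D$.

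\emph{Admissibility.} For the last assertion I would first classify the proper $\Q$-parabolics of $G$: since the hermitian form has Witt index $1$ (Lemma \ref{zeroline}), the only nonzero isotropic $E$-subspaces are the isotropic lines, so every proper parabolic is the stabilizer of such a line, i.e. some $Q_{D'}$. It then remains to verify Pink's condition \cite[Definition 4.5]{Pi} against the cocharacter attached to the morphism $h_{v_1,v_2,v_3}$ of Definition \ref{datum}. I expect this to be the main obstacle, since it requires unwinding Pink's abstract definition (the compatibility of $Q_D$ with the weight and Hodge cocharacters of the datum) rather than running a direct computation; once that cocharacter is made explicit in the sequel, the check becomes mechanical and shows that every $Q_D$ is admissible.
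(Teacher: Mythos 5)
Your handling of the first five assertions is essentially sound and follows the same lines as the paper: the shape of $Q_D$ via stabilization of $D$ and $D^{\perp}$, the explicit relations $p=-b\overline{r}$ and $\Re(q)=-\tfrac{b}{2}|r|^2$ cutting out $R_u(Q_D)$ (this is exactly what the paper means by "imposing the condition of being elements of $G$"), and the dimension counts for the tori. Two small remarks there. For the maximal $\Q$-split torus you simply assert that the $\Q$-split rank is $2$ (Witt index plus similitude factor); this is true but is precisely the point the paper proves by hand, via the observation that complex conjugation swaps $\lambda_1\leftrightarrow\lambda_2$ and $\lambda_3\leftrightarrow\lambda_4$ on any maximal torus, so a $\Q$-split subtorus must satisfy $\lambda_1=\lambda_2$, $\lambda_3=\lambda_4$ and hence has dimension at most $2$. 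For the Borel, your route (the isomorphism $\phi_{\mathcal{B}}$ carries $Q_{D,\C}$ onto the standard Borel $T_s\times\G_{m,\C}$, and a Borel contained in a Borel equals it) is a clean equivalent of the paper's count $\dim Q_D=7=\dim B_D$; just note that the equality $\phi_{\mathcal{B}}(Q_{D,\C})=T_s\times\G_{m,\C}$, rather than mere containment, needs the identification $Q_D/R_uQ_D\cong T_{m,D}$ or a direct check that upper-triangular elements on $V_i$ automatically stabilize $D\otimes\C$.

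The genuine gap is the admissibility claim, which is the titular assertion of the proposition and which you never actually prove. Pink's Definition 4.5 is purely group-theoretic: a parabolic $Q\subset G$ is admissible if and only if its image in every $\Q$-simple factor of the adjoint group $G^{\mathrm{ad}}$ is either that whole factor or a maximal proper parabolic of it. It involves no weight or Hodge cocharacter of the Shimura datum; you have conflated it with the content of Lemma \ref{cocharacter}, where a cocharacter is attached to the triple $(Q_D,B_D,T_D)$ \emph{after} admissibility is known. So your plan to "verify Pink's condition against the cocharacter attached to $h_{v_1,v_2,v_3}$", deferred to a "mechanical check" in the sequel, is aimed at the wrong condition and is, in any case, left unexecuted. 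The fix is short and you already hold its ingredients: by Remark \ref{extension} the adjoint group of $G$ is $\Q$-simple (a $\Q$-form of $\SL_3$ up to isogeny), so the proper admissible parabolics are exactly the maximal proper $\Q$-parabolics; your Witt-index-$1$ classification then shows every proper $\Q$-parabolic is some $Q_{D'}$, and each $Q_{D'}$ is maximal proper (a proper parabolic strictly containing $Q_{D'}$ would again be some $Q_{D''}$ of the same dimension $7$, which is absurd), whence the admissible parabolics are exactly the $Q_{D'}$. Note finally that this classification of $\Q$-parabolics, which you import from the general theory of classical groups, is the one step the paper proves from scratch: a $\Q$-parabolic must stabilize a line of $V_i$, hence an $E$-line $l$ of $V$ together with $l^{\perp}$, and $l$ must be isotropic since otherwise the stabilizer has dimension at most $6<7$.
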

\begin{proof}
Let $w_1,w_2,w_3$ be a \textit{parabolic basis} adapted to $D$. The group $Q_D$ stabilizes the line $D$ and so it has to stabilize also $D^{\perp}$ the plan orthogonal to $D$. As $D$ is generated by $w_1$ and $D^{\perp}$  is generated by $w_1$ and $w_2$ we deduce the description of $Q_D$ in the statement. The unipotent radical of $Q_D$ is \[R_uQ_D=G \cap 
\left\{\left(\begin{array}{ccc}
(1,0) & (a_{12} , b_{12})& (a_{13} , b_{13})\\
(0,0) & (1,0) & (a_{23} , b_{23}) \\
(0,0)& (0,0) & (1,0)
\end{array}\right)\right\}.
\]
By imposing the condition of being elements of the group $G$, we find the equations in the statement.

A computation shows that the elements of $T_{m,D}$ belong to $G$ and hence to $Q_D$. This torus is of dimension $4$ and so, by Remark \ref{extension}, it is maximal.

All maximal torus over $\C$ are conjugated, in particular on the diagonal of any such torus $T$ we will have the same coordinates appearing in $T_{m,D}$. In particular, a subtorus of $T$ that is defined and splits over $\Q$ must verify $\lambda_1=\lambda_2$ and $\lambda_3=\lambda_4$, hence it is of dimension at most $2$.  As $T_D$ has dimension $2$, it is a maximal split torus defined over $\Q$.

Note that by the description of $Q_D, R_u Q_D$ and $T_{m,D}$ we gave above, we have $Q_D/R_u Q_D \cong T_{m,D}.$ In particular $Q_D$ has dimension $7$ (and it is connected). By Remark \ref{extension}, $B_D$ must have dimension $7$, hence $B_D=Q_D$.

By Remark \ref{extension}, the adjoint group of $G$ is simple up to isogeny (namely it is a $\Q$-form of $\SL_3$), in particular the admissible parabolic subgroups of $G$ are the maximal $\Q$-parabolic subgroups of $G$. It is clear that a subgroup of the form $Q_{D'}$ is a parabolic subgroup. Let us show now that any $\Q$-parabolic subgroup of $G$ is contained in one subgroup of the form $Q_{D'}$. Following notations from Remark \ref{extension}, a parabolic subgroup $P$ defined over $E$ has to stabilize a line of $V_i$; moreover it is of dimension at least $7$. If $P$ is moreover defined over $\Q$, then it has to stabilize a line $l$ of $V$, and hence also the orthogonal plan $l^{\perp}$. This line has to be isotropic, otherwise these two conditions force $P$ to be of dimension at most $6$.

\end{proof}
\begin{lem}\label{cocharacter}
Let $(Q_D, B_D, T_D)$ be as in Proposition \ref{parabolic}.  Consider the cocharacter $\lambda_{D}: \mathbb{G}_{m,\Q}\longrightarrow T_D$  given in a parabolic basis  adapted to $D$ (Definition \ref{parabolic basis}) by
\[\lambda_{D}:t \mapsto \left\{   \left(  \begin{array}{ccc}
(t,0) & (0,0)& (0,0)\\ 
(0,0) & (1,0)& (0,0) \\
(0,0)& (0,0) & (t^{-1},0)
\end{array}\right)\right\}\]
(we write coordinates using Convention \ref{convention couple}). Then $\lambda_{D}$ is the cocharacter associated to the data $(Q_D, B_D, T_D)$ in the general formalism of \cite[\S 4.1]{Pi}. 
\end{lem}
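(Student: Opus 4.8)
The plan is to recall the characterization of the cocharacter attached to an admissible parabolic in \cite[\S 4.1]{Pi} and to check that the explicit $\lambda_D$ of the statement satisfies it. That cocharacter is a cocharacter of the maximal $\Q$-split torus $T_D$, defined over $\Q$, whose associated dynamic parabolic
\[P(\lambda)=\{g\in G : \textstyle\lim_{t\to 0}\lambda(t)\,g\,\lambda(t)^{-1}\ \text{exists in }G\}\]
is $Q_D$, and whose induced grading on $\Lie R_u(Q_D)$ realizes the weight filtration of the mixed Shimura datum carried by the corresponding boundary stratum; the normalization pins down both the ``speed'' and the central component. I would verify these conditions in turn.

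First, comparing the matrix in the statement with the description of $T_D$ in Proposition \ref{parabolic} (set $\mu=1$ and $\lambda=t$) shows at once that $\lambda_D$ is a $\Q$-rational cocharacter of $T_D$, primitive in its cocharacter lattice. A short computation with $J_b$ moreover gives that the similitude factor of $\lambda_D(t)$ equals $1$ and that $\det\lambda_D(t)=1$, so that $\lambda_D$ factors through the derived group $G^{\der}$; this fixes the central component of the normalization, $\lambda_D$ being complementary to the weight cocharacter of the Picard datum (the scalar $\mathbb{G}_{m}\hookrightarrow G$ obtained by restricting $h_{v_1,v_2,v_3}$ along $\mathbb{G}_{m,\R}\hookrightarrow\D$ of Definition \ref{datum}, whose similitude factor is nontrivial).

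Next I would compute the adjoint action of $\lambda_D(t)$ in a parabolic basis adapted to $D$. By Convention \ref{convention couple} and Lemma \ref{calcul}, $\lambda_D(t)$ acts on the standard representation as the diagonal matrix with entries $t,1,t^{-1}$, so conjugation scales the $(j,k)$-entry of a matrix by $t^{d_j-d_k}$ with $(d_1,d_2,d_3)=(1,0,-1)$. The limit as $t\to 0$ exists exactly on the upper-triangular entries, that is on $Q_D$ by Proposition \ref{parabolic}, whence $P(\lambda_D)=Q_D$. The same computation shows that $\lambda_D$ acts on $\Lie R_u(Q_D)$ with weight $1$ on the $(1,2)$- and $(2,3)$-entries and weight $2$ on the $(1,3)$-entry, in accordance with the two steps of the lower central series of $R_u(Q_D)$; this is the primitive grading.

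The main obstacle is to match this grading with Pink's normalization, since he specifies the cocharacter through the Hodge cocharacter $\mu_h$ and the limit mixed Hodge structure rather than by a formula. I would unwind his construction just far enough to read off that it produces weights $1,2$ on $\Lie R_u(Q_D)$ and trivial central component. Once this is checked, uniqueness is automatic: here $G$ has semisimple $\Q$-rank one (so that $Q_D=B_D$, as in Proposition \ref{parabolic}), hence the cocharacters of $T_D$ with trivial similitude and dynamic parabolic $Q_D$ form, up to positive scaling, a single ray in the cocharacter lattice of $T_D\cap G^{\der}$, and primitivity singles out $\lambda_D$. The verifications of the preceding paragraphs are routine matrix computations; the only genuinely new point is the identification of the two normalizations.
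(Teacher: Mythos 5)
Your overall strategy runs parallel to the paper's: both arguments reduce to Pink's normalization conditions, and your matrix computations are correct (trivial similitude factor, $\Q$-rationality, dynamic parabolic equal to $Q_D$, weights $1$ and $2$ on the entries of $\Lie R_u(Q_D)$, the identification $Q_D=B_D$). But there is a genuine gap, and you have located it yourself: everything hinges on knowing that the cocharacter produced by Pink's formalism acts on $\Lie R_u(Q_D)$ with weights exactly $1$ and $2$ (equivalently, that it is the \emph{primitive} generator of the relevant ray), and you defer precisely this point (``I would unwind his construction just far enough to read off that it produces weights $1,2$\ldots''). Without it the uniqueness argument does not close: every property you actually verify --- $\Q$-rationality, trivial central component, dynamic parabolic $Q_D$ --- is equally satisfied by $t \mapsto \mathrm{diag}(t^a,1,t^{-a})$ for any integer $a\geq 1$, and primitivity of your explicit $\lambda_D$ is of no use unless you know that Pink's cocharacter is itself primitive, which is exactly what is at stake.

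The paper closes this gap with a short argument that is the real content of the lemma. Parametrize the cocharacters of $T_D$ as $t\mapsto t^b\,\mathrm{diag}(t^a,1,t^{-a})$; Pink's conditions give $b=0$ (image in the derived group) and $a\geq 0$ (since $\Lie Q_D$ must be the sum of the non-negative eigenspaces), as in your first two steps. Then one observes that $R_u(Q_D)$ coincides with $W_D$, the unipotent radical of the group $P_D$ belonging to the mixed Shimura datum $(P_D,X_D)$ of the boundary stratum (Lemma \ref{smallest}), and that by \cite[\S4.8, 4.9, 4.10]{Pi} the grading $\Lie R_u(Q_D)=(\Lie G)_a+(\Lie G)_{2a}$ induced by the cocharacter is the weight grading coming from this mixed Shimura datum. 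Since the weights allowed in a mixed Shimura datum are only $0$, $-1$ and $-2$ (\cite[Definition 2.1]{Pi}), and since both eigenspaces $(\Lie G)_a$ and $(\Lie G)_{2a}$ are non-trivial, one must have $a=1$. This is the ``identification of the two normalizations'' that you postponed; it is not a routine unwinding of definitions but the decisive step, and your write-up is incomplete without it.
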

\begin{proof}
Any cocharacter $\lambda: \mathbb{G}_{m,\Q}\longrightarrow T_D$ is of the form
\[t \mapsto \left\{  t^b  \left(  \begin{array}{ccc}
(t^a,0) & (0,0)& (0,0)\\
(0,0) & (1,0)& (0,0) \\
(0,0)& (0,0) & (t^{-a},0)
\end{array}\right)\right\}.\] 
By \cite[\S 4.1]{Pi}, the image of $\lambda_{D}$ has to be contained in the derived group of $G$, so $b=0$. 

Consider the action of $ \mathbb{G}_{m,\Q}$ over $\Lie G$ induced by $\lambda_{D}$. By \cite[\S 4.1]{Pi}, the sub-Lie algebra $\Lie Q_D\subset \Lie G$ coincide with the sum of the eigenspaces associated to eigenvalues of non-negative weights, hence $a\geq 0$. Note that in the decomposition $\Lie Q_D = (\Lie G)_0 + (\Lie G)_a + (\Lie G)_{2a}$ each of the three eigenspaces is non-trivial. Also note that we have $\Lie R_u(Q_D)=(\Lie G)_a + (\Lie G)_{2a}$. 

On the other hand, as $G$ is reductive,  $R_u(Q_D)$ is the unipotent radical of a group belonging to a Shimura datum (it will be the group $P_D$ of Lemma \ref{smallest}) and the decomposition $\Lie R_u(Q_D)=(\Lie G)_a + (\Lie G)_{2a}$ is the one induced by the Shimura datum (see \cite[\S4.8, 4.9, 4.10]{Pi}). In particular, as the weights allowed in a mixed Shimura datum are  $0, -1$ et $-2$ (see \cite[Definition 2.1]{Pi}), we must have $a=1.$ 
\end{proof}
\begin{npar}{Notation}\label{morphuniv}
Following \cite[\S 4.2, 4.3]{Pi}, we write the following morphisms of algebraic groups
$h_0: \D_{\C} \rightarrow \D_{\C}\times \GL_{2,\C} $
\[h_0:(z_1,z_2)\mapsto \left[(z_1,z_2) \, , \,  \left(\begin{array}{cc}
\frac{z_1+z_2}{2} & \frac{z_2-z_1}{2i} \\
 \frac{z_1-z_2}{2i} & \frac{z_1+z_2}{2}
\end{array}
\right)\right],\]
and
$h_{\infty}: \D_{\C} \rightarrow \D_{\C}\times \GL_{2,\C} $
\[h_{\infty}:(z_1,z_2)\mapsto  \left[(z_1,z_2) \, , \, \left(\begin{array}{cc}
1 & i(z_1 z_2 - 1)\\
 0 & z_1 z_2
\end{array}
\right)\right].\]
We will consider also the "weight morphism" $p:\G_{m,_R}\rightarrow \D$ given by $z \mapsto (z,z).$

\end{npar}
\begin{lem}\label{lemma}
Let $D$ be an isotropic $E$-line of $V$, and  $\mathcal{B}=\{w_1,w_2,w_3\}$ a parabolic basis adapted to $D$ (Definition \ref{parabolic basis}). Consider the map
$\omega_{w_1,w_2,w_3}: \D_{\C}\times \GL_{2,\C} \rightarrow G_{\C}$ given (in the basis $\mathcal{B}$ and using Convention \ref{convention couple}) by
\[\left[(z_1, z_2),\left(\begin{array}{cc}
a & b \\
 c & d
\end{array}
\right)\right]
\mapsto
\left(\begin{array}{ccc}
(d ,0)& (0,0)                   & (0,\frac{-b}{\sqrt{n}})          \\
   (0,0)                                          & (\frac{z_1+z_2}{2},\frac{z_1-z_2}{2i\sqrt{n}})   & (0,0)          \\
   (0,\frac{c}{\sqrt{n}})                      & (0,0)                   & (a,0)
   \end{array}
\right).
\]

Then $\omega_{w_1,w_2,w_3}$ is the only map verifying the following properties:
\begin{itemize}
\item it is a morphism of algebraic groups defined over $\R$;
\item the equality $\omega_{w_1,w_2,w_3}\circ h_0=h_{w_2,w_1+w_3,w_1-w_3}$ holds, 
\item the cocharacters $\lambda_{D} \cdot (h_{w_2,w_1+w_3,w_1-w_3} \circ p)$ and  $\omega_{w_1,w_2,w_3}\circ h_{\infty}\circ p$ are conjugated  one to the other by an element of $Q_D(\C)$.
\end{itemize}
Here  the morphism $h_{w_2,w_1+w_3,w_1-w_3}$ is defined in Definition \ref{datum}, the group $Q_D$ is defined in Proposition \ref{parabolic}, the cocharacter $\lambda_{D}$ is defined in Lemma \ref{cocharacter}, and the morphisms $h_0,h_{\infty},p$ are defined in Notation \ref{morphuniv}.
\end{lem}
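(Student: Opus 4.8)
The plan is to split the statement into existence (the displayed $\omega_{w_1,w_2,w_3}$ does satisfy the three bullet points) and uniqueness (nothing else does). For existence I would first check that the formula defines a homomorphism of algebraic groups landing in $G_\C$. The map has a block shape in the basis $\mathcal{B}=\{w_1,w_2,w_3\}$: the central coefficient reproduces the Deligne-torus block attached to the $\D_\C$-factor, whose multiplicativity is exactly the formal identity of Lemma \ref{calcul}, while the outer coefficients (positions $(1,1),(1,3),(3,1),(3,3)$) realise $\GL_{2,\C}$ acting on the plane $\langle w_1,w_3\rangle$. A direct multiplication shows these two blocks multiply independently, so $\omega_{w_1,w_2,w_3}$ is a homomorphism; imposing the relations of Convention \ref{convention couple} together with the shape of $J_b$ shows the image preserves $J_b$ up to the scalar $\det\left(\begin{smallmatrix}a&b\\c&d\end{smallmatrix}\right)=ad-bc$, hence lands in $G_\C$. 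As all coefficients are real-algebraic and compatible with the natural real structures on source and target, the morphism is defined over $\R$.

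Next I would verify the two normalisation conditions. For the second bullet I substitute $h_0$ (Notation \ref{morphuniv}) into the formula; after rewriting the result in the eigenbasis $w_2,\,w_1+w_3,\,w_1-w_3$ one reads off exactly the diagonal morphism of Definition \ref{datum}, i.e. $\omega_{w_1,w_2,w_3}\circ h_0=h_{w_2,w_1+w_3,w_1-w_3}$; the only subtlety is that the off-diagonal coefficient $\tfrac{z_1-z_2}{2}$ of the basis change is imaginary in the $E\otimes\C$-structure, so it shows up in the $b$-slot of Convention \ref{convention couple}. For the third bullet I compute both cocharacters in pair-notation: using $p(t)=(t,t)$ one gets $\lambda_{D}\cdot(h_{w_2,w_1+w_3,w_1-w_3}\circ p)(t)=\operatorname{diag}(t^2,t,1)$, while $\omega_{w_1,w_2,w_3}\circ h_\infty\circ p(t)$ has the same diagonal $(t^2,t,1)$ and a single extra entry $t^2-1$ in position $(1,3)$. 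These two are conjugate inside $Q_D(\C)$: the element of $R_u(Q_D)$ (Proposition \ref{parabolic}) with all parameters zero except $b_{13}=i/\sqrt{n}$ does the job. The point to watch here is that this conjugator is genuinely not real --- its $(1,3)$-entry lies in the $i\sqrt{n}$-slot --- so it is invisible if one only looks for a rational or real conjugating matrix.

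For uniqueness I would use that a homomorphism $\D_\C\times\GL_{2,\C}\to G_\C$ amounts, by Remark \ref{extension}, to a three-dimensional representation (its action on $V_i$) together with a multiplier character. The second bullet fixes this representation on the subtorus $\operatorname{im}(h_0)$; the weights it produces there already exclude the $\Sym^2$-type and force the representation to act as $\mathrm{std}\oplus\mathrm{triv}$ on $\langle w_1,w_3\rangle\oplus\langle w_2\rangle$, and they constrain the remaining characters. What is left undetermined is a central twist and the one-parameter direction of the highest root; requiring the image to lie in $G_\C$ (first bullet) together with the cocharacter relation (third bullet) removes exactly this freedom. I expect the main obstacle to be the bookkeeping of the $E\otimes\C$-structure in the third bullet --- keeping the $a$- and $b$-slots of Convention \ref{convention couple} straight when multiplying and conjugating --- rather than any conceptual difficulty; a secondary point of care is making the representation-theoretic uniqueness argument airtight, in particular checking that the residual ambiguity is precisely the one killed by the third bullet.
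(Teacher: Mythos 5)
Your reading of what needs to be proved matches the paper only in part: the paper does not prove existence or uniqueness at all --- both are quoted from Pink's Proposition 4.6 --- and its entire proof consists of verifying that the displayed map satisfies the three listed properties, the only nontrivial point being the explicit conjugator for the third bullet. Your conjugator (the unipotent element with $b_{13}=i/\sqrt{n}$, i.e.\ complex $(1,3)$-entry $-1$) is the inverse of the one the paper writes down and works equally well, and your verification of the second bullet in the eigenbasis $w_2,\,w_1+w_3,\,w_1-w_3$ is correct. The two places where you go beyond the paper --- re-proving existence of the homomorphism into $G_\C$ and uniqueness --- are exactly where your argument breaks, and for the same underlying reason.

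First, the claim that the image ``preserves $J_b$ up to the scalar $ad-bc$, hence lands in $G_\C$'' is false. The outer block does rescale the hyperbolic pairing between $w_1$ and $w_3$ by $ad-bc$, but the middle block acts on $w_2$ by the $E\otimes\C$-scalar whose two embeddings are $(z_1,z_2)$, so it rescales $J_b(w_2,w_2)$ by the norm $z_1z_2$. An element of $GU(V,J)$ has a single multiplier, so the displayed matrix lies in $G_\C$ only when $ad-bc=z_1z_2$: for instance $[(1,1),\mathrm{diag}(2,1)]$ is sent to $\mathrm{diag}(1,1,2)$ in the basis $\{w_1,w_2,w_3\}$, which multiplies $J_b(w_1,w_3)$ by $2$ but $J_b(w_2,w_2)$ by $1$, hence does not belong to $G(\C)$. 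The formula defines a homomorphism into $G_\C$ only on the fibre product of the norm $\D_\C\to\G_{m,\C}$, $(z_1,z_2)\mapsto z_1z_2$, with $\det\colon\GL_{2,\C}\to\G_{m,\C}$; note that both $h_0$ and $h_\infty$ factor through this fibre product, since the determinants of their $\GL_2$-components equal $z_1z_2$.

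Second, and consequently, your uniqueness strategy cannot be completed from the three bullets on the full direct product: the character $[(z_1,z_2),g]\mapsto\det(g)(z_1z_2)^{-1}$ is defined over $\R$ and is trivial on the images of both $h_0$ and $h_\infty$, so twisting any solution $\omega$ by the central homomorphism $[(z_1,z_2),g]\mapsto\bigl(\det(g)(z_1z_2)^{-1}\bigr)^{a}\,\mathrm{id}_V$ yields, for every integer $a$, another $\R$-homomorphism satisfying the second and third bullets verbatim. So the residual central twist you identify is precisely \emph{not} killed by the third bullet, contrary to what you assert; no argument that only invokes the three stated conditions on $\D_\C\times\GL_{2,\C}$ can pin $\omega$ down. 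The ambiguity disappears only in the precise setting of Pink's Proposition 4.6 --- in particular, on the fibre product above the twisting character is identically $1$ --- which is why the paper leans entirely on that citation instead of attempting a self-contained argument. To repair your proof you would either have to quote Pink as the paper does, or restate and prove the lemma on the fibre product, where your representation-theoretic uniqueness argument has a chance of closing.
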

\begin{proof}
Existence and uniqueness of such a morphism come from \cite[Proposition 4.6]{Pi}. The properties are easy to check. Note that the cocharacters
\[t \mapsto \left(\begin{array}{ccc}
(t^2,0) & (0,0)                   & (0,0)          \\
   (0,0)                         & (t,0)   & (0,0)          \\
   (0,0)                      & (0,0)                   & (1,0)

\end{array}
\right)
\; \textrm{and} \;
t \mapsto \left(\begin{array}{ccc}
(t^2,0) & (0,0)                   & (0,\frac{i(1-t^2)}{\sqrt{n}})          \\
   (0,0)                         & (t,0)   & (0,0)          \\
   (0,0)                      & (0,0)                   & (1,0)

\end{array}
\right).
\]
are conjugated one to the other by
\[\left(\begin{array}{ccc}
(1,0) & (0,0)                   & (0, -\frac{i}{\sqrt{n}})          \\
   (0,0)                         & (1,0)   & (0,0)          \\
   (0,0)                      & (0,0)                   & (1,0)

\end{array}
\right)\]
which belongs to $Q_D(\C)$.
\end{proof}
\begin{defin}\label{boundary morph}
Let us keep the notation from Lemma \ref{lemma}, we will write  \[h_{\mathcal{B},\infty}:\D_{\C}\rightarrow Q_{D,\C}\] for the morphism $\omega_{w_1,w_2,w_3} \circ h_{\infty};$ explicitly

\[(z_1, z_2)
\mapsto\left(\begin{array}{ccc}
(z_1z_2 ,0)& (0,0)                   & (0,\frac{i(1-z_1z_2)}{\sqrt{n}})          \\
                                         & (\frac{z_1+z_2}{2},\frac{z_1-z_2}{2i\sqrt{n}})   & (0,0)          \\
                      &                   & (1,0)

\end{array}
\right).\]
\end{defin}
\begin{lem}\label{smallest}
The smallest normal $\Q$-subgroup of the group $Q_D$ (Proposition \ref{parabolic}) containing the image of the morphism $h_{\mathcal{B},\infty}$ (Definition \ref{boundary morph}) is
\[P_{D}
=G \cap 
\left\{\left(\begin{array}{ccc}
(z_1z_2,0) & *& *\\
 & (\frac{z_1+z_2}{2},\frac{z_1-z_2}{2i\sqrt{n}}) & *\\
& & (1,0)
\end{array}\right)\right\}
=G \cap 
\left\{\left(\begin{array}{ccc}
(a^2+b^2,0) & *& *\\
 & (a, \frac{b}{\sqrt{n}})& *\\
& & (1,0)
\end{array}\right)\right\}
.\]
Moreover, $P_D(\R)$ is path connected and $W_D$, the unipotent radical of $P_D$, coincides with $R_u Q_D$, the unipotent radical of $Q_D$.
\end{lem}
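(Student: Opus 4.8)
The plan is to pin down the normal closure through a Lie-algebra ideal, and then read all three assertions off the resulting description.

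First I would reconcile the two presentations of $P_D$ and record the easy structural facts. By Convention \ref{convention couple} a diagonal pair $(\alpha,\beta)$ represents the element $\alpha+\beta\, i\sqrt n\in E$, acting as $\alpha+\beta\, i\sqrt n$ on $V_i$ and as its conjugate on $V_{-i}$ (Remark \ref{extension}). With $z_2=\overline{z_1}$ on real points one has $\tfrac{z_1+z_2}{2}=a$, $\tfrac{z_1-z_2}{2i\sqrt n}=\tfrac{b}{\sqrt n}$ and $z_1z_2=a^2+b^2$, so the two displayed matrix sets coincide; both equal
\[P_D=\bigl\{g\in Q_D \,:\, (g)_{33}=1,\ (g)_{11}=N_{E/\Q}\bigl((g)_{22}\bigr)\bigr\},\]
where $(g)_{kk}\in E$ is the $k$-th diagonal entry and $(g)_{11}$, $N_{E/\Q}((g)_{22})$ lie in $\Q$. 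These are $\Q$-rational conditions, so $P_D$ is a $\Q$-subgroup; it plainly contains $R_uQ_D$ (diagonal $(1,1,1)$) and, by Definition \ref{boundary morph}, the image of $h_{\mathcal{B},\infty}$ (whose diagonal is $(z_1z_2,z_1,1)$ on $V_i$). Normality is then immediate: $Q_D/R_uQ_D\cong T_{m,D}$ is abelian (proof of Proposition \ref{parabolic}) and $P_D\supseteq R_uQ_D$, so the image of $P_D$ in this torus is automatically normal.

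The heart of the argument is that, $Q_D$ being connected and the characteristic being $0$, the smallest normal subgroup containing a connected subgroup corresponds to the smallest $\Ad(Q_D)$-stable subspace of $\Lie Q_D$ containing $\Lie(\operatorname{im} h_{\mathcal{B},\infty})$. Differentiating $h_{\mathcal{B},\infty}$ and using $G_\C\cong\GL(V_i)\times\G_m$ (Remark \ref{extension}), under which $Q_D=B_D$ becomes the upper-triangular Borel of $\GL(V_i)=\GL_3$, the space $\Lie(\operatorname{im} h_{\mathcal{B},\infty})$ has toral part filling the plane $\{\operatorname{diag}(x,y,0)\}\subset\mathfrak{gl}_3$ (the free parameters $z_1,z_2$ give $x=\dot z_1+\dot z_2$, $y=\dot z_1$), plus a nilpotent summand in the $(1,3)$ root space. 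The three positive roots restrict to this plane as $x-y$, $x$ and $y$, all nonzero; hence bracketing the toral part against $\Lie Q_D$ produces each of the three positive root spaces, i.e.\ all of $\Lie R_uQ_D$. Therefore the ideal generated by $\Lie(\operatorname{im} h_{\mathcal{B},\infty})$ is $\{\operatorname{diag}(x,y,0)\}\oplus\Lie R_uQ_D=\Lie P_D$. This identifies the normal closure with $P_D$ and, passing to unipotent radicals, gives $W_D=R_uQ_D$.

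Finally, for connectedness I would use the extension $1\to W_D\to P_D\to T'\to 1$ with $T':=P_D/W_D$. The map $g\mapsto (g)_{22}$ identifies $T'$ with $\Res_{E/\Q}\G_m$, since after imposing $(g)_{33}=1$ and $(g)_{11}=N_{E/\Q}((g)_{22})$ the entry $(g)_{22}$ ranges freely over $E^\times$. Hence $T'(\R)=(E\otimes_\Q\R)^\times=\C^\times$ is path connected, while $W_D$ is unipotent so $W_D(\R)\cong\R^3$ and $H^1(\R,W_D)=0$; thus $P_D(\R)\twoheadrightarrow T'(\R)$ with connected fibre, and $P_D(\R)$ is path connected. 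The step I expect to be the main obstacle is the ideal computation, namely verifying that the $\Ad$-orbit of the two-dimensional toral part sweeps out all three positive root spaces (the non-vanishing of $x-y$, $x$, $y$ on the plane $\{\operatorname{diag}(x,y,0)\}$); this is exactly what forces $W_D$ to be the entire unipotent radical rather than a proper subgroup, everything else being bookkeeping in Convention \ref{convention couple}.
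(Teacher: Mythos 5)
Your proof is correct, but its core step is genuinely different from the paper's. The paper never computes an ideal: it quotes \cite[proof of Lemma 4.8]{Pi} for the a priori identity $W_D=R_uQ_D$, after which minimality is immediate --- the normal closure contains the image of $h_{\mathcal{B},\infty}$ and contains $W_D=R_uQ_D$, and these two visibly generate the displayed group $P_D$, which is itself normal in $Q_D$ and contains the image. So in the paper $W_D=R_uQ_D$ is an \emph{input} from the general theory, whereas your root-space computation (all three positive roots restrict to the nonzero functionals $x-y$, $x$, $y$ on the toral directions coming from $\Lie(\operatorname{im}h_{\mathcal{B},\infty})$, hence any $\Ad(Q_D)$-stable subspace containing that Lie algebra must contain every positive root space, i.e.\ all of $\Lie R_uQ_D$) makes the lemma self-contained and delivers $W_D=R_uQ_D$ as an \emph{output}. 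You also justify normality (automatic for any subgroup containing $R_uQ_D$, since $Q_D/R_uQ_D\cong T_{m,D}$ is abelian), a point the paper merely asserts, and your connectedness argument via $1\to W_D\to P_D\to\Res_{E/\Q}\G_m\to 1$ and $H^1(\R,W_D)=0$ replaces the paper's hands-on observation that $P_D(\R)$ is generated by two explicit path-connected subgroups (a copy of $\C^\times$ and a unipotent group homeomorphic to $\R^3$); in fact you do not even need cohomology there, since $u\mapsto\operatorname{diag}(N_{E/\Q}(u),u,1)$ splits the projection onto $\Res_{E/\Q}\G_m$.

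One imprecision to repair in the key step: $\Lie(\operatorname{im}h_{\mathcal{B},\infty})$ is the two-dimensional graph $\{(\operatorname{diag}(x,y,0)+xE_{13},\,x)\}$, not the direct sum of the toral plane with a line in the $(1,3)$ root space, so the toral plane is not literally contained in the space you propose to bracket against $\Lie Q_D$. The argument survives unchanged because $E_{13}$ commutes with every positive root vector (it spans the centre of $\Lie R_uQ_{D,\C}$), so bracketing a root vector against an element of the graph only sees the toral component; alternatively, the elements with $x=0$ are already purely toral, and $[E_{13},\operatorname{diag}(1,0,0)+E_{13}]=-E_{13}$ puts $E_{13}$, and then the whole toral plane, inside the ideal. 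With that wording fixed, your Lie-theoretic route is complete.
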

\begin{proof}
First of all, note that the equality $W_D=R_u Q_D$ holds a priori by \cite[proof of Lemma 4.8]{Pi}. Now, the image of the morphism $h_{\mathcal{B},\infty}$ is the group \[\Im =
\left\{\left(\begin{array}{ccc}
(a^2+b^2,0) & (0,0)& (0,\frac{i(1-a^2-b^2)}{\sqrt{n}})\\
 & (a, \frac{b}{\sqrt{n}})& (0,0)\\
& & (1,0)
\end{array}\right)\right\}.\]
Note that  $P_D$ described in the statement is a normal subgroup $Q_D$ and it contains $\Im$.  
On the other hand, the group $P_D$ has to contain $W_D$. We deduce that $P_D$ cannot be smaller.

Let us now show that the group  $P_D(\R)$ is path connected. First note that, as subgroup of $\GL_{3,\C}$, it coincides to the set of elements of the form
\[
\left\{\left(\begin{array}{ccc}
|z|^2 & w_1& w_2\\
 & z& w_3\\
& & 1
\end{array}\right)\right\},\]
respecting the hermitian form $J_b$ up to a scalar (see Definition \ref{parabolic basis}). In particular it is generated by the two subgroups 
\[
\left\{\left(\begin{array}{ccc}
|z|^2 & & \\
 & z& \\
& & 1
\end{array}\right)\right\}
\; \textrm{and} \;
\left\{\left(\begin{array}{ccc}
1 & -bt+ibu& b\frac{t^2+u^2}{2}+iv\\
 & 1 & t+iu\\
& & 1
\end{array}\right)\right\},\]
which are both path-connected.
\end{proof}
\begin{defin}\label{boundary data}
Let $p:\G_{m,_R}\rightarrow \D$ be as in \S \ref{morphuniv}, $h_{\mathcal{B},\infty}$ as in Definition \ref{boundary morph} and 
$P_{D}$ as in Lemma \ref{smallest}. Following \cite[Lemma 4.8]{Pi}, we define the unipotent algebraic group $U_D$ and the topological space $X_D$ as follows. Define 
\[U_D = \exp (W_{-2}\Lie P_D),\]
with $W_{-2}\Lie P_D$ being the subspace of $\Lie P_D$ where, for $t \in \G_{m}$,  the action of $h_{\mathcal{B},\infty} \circ p(t)$ is given by the multiplication by $t^2$.
Define $X_D$ as the orbit of $h_{\mathcal{B},\infty}$ under the action by conjugation of $P_D(\R)U_D(\C).$
\end{defin}
\begin{prop}\label{data bound}
The topological space $X_D$ (see Definition \ref{boundary data}) is connected and the pair $(P_D,X_D)$  is a (mixed) Shimura datum (see \cite[Definition 2.1]{Pi}). Moreover $(P_D,X_D)$ is a proper rational boundary component of $(G,X)$, and any proper rational boundary component of $(G,X)$ is of the form $(P_{D'},X_{D'})$ for some isotropic $E$-line $D'$ of $V$  (see \cite[\S4.11]{Pi} for generalities on proper rational boundary components).
\end{prop}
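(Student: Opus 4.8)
The plan is to reduce the entire statement to the general formalism of Pink \cite[\S4]{Pi}. All the explicit objects introduced in Lemmas \ref{cocharacter}--\ref{smallest} and in Definitions \ref{boundary morph}--\ref{boundary data} have been tailored so as to coincide with the abstract data that Pink's construction attaches to an admissible parabolic. Since we proved in Proposition \ref{parabolic} that $Q_D$ is an admissible parabolic of $(G,X)$, I would apply \cite[Lemma 4.8]{Pi} directly: it yields that $(P_D,X_D)$ is a mixed Shimura datum which is a proper rational boundary component of $(G,X)$. Consequently the only genuinely new input we must supply by hand is the connectedness of $X_D$; everything else amounts to recording that our explicit data agree with the data produced by the general theory, so that the verification of the mixed Shimura datum axioms is subsumed by the cited lemma.

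For the connectedness I would argue that the group acting in Definition \ref{boundary data}, namely $P_D(\R)U_D(\C)$, is path-connected, so that its orbit $X_D$ of the single point $h_{\mathcal{B},\infty}$ is path-connected as the continuous image of a path-connected space under the orbit map. The factor $P_D(\R)$ is path-connected by Lemma \ref{smallest}. The factor $U_D(\C)$ is the group of complex points of a connected unipotent group, hence diffeomorphic to a complex affine space via the exponential and in particular path-connected. Therefore the product $P_D(\R)U_D(\C)$ is path-connected and $X_D$ is connected.

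For the classification of the proper rational boundary components, I would invoke the correspondence, recalled in \cite[\S4.11]{Pi}, between proper rational boundary components of $(G,X)$ and the admissible parabolics of $G$: to each admissible parabolic the construction of Definition \ref{boundary data} attaches a boundary component, and conversely every proper rational boundary component arises in this way. Combined with Proposition \ref{parabolic}, which identifies the admissible parabolics as exactly the subgroups $Q_{D'}$ for $D'$ an isotropic $E$-line of $V$, this shows that the proper rational boundary components are precisely the pairs $(P_{D'},X_{D'})$.

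The main obstacle is conceptual rather than computational: one must be confident that the hand-built morphism $h_{\mathcal{B},\infty}$, the group $P_D$, and the space $X_D$ genuinely are the objects to which \cite[Lemma 4.8]{Pi} refers. This is precisely why the preceding lemmas were stated with such care. Lemma \ref{lemma} pins down $\omega_{w_1,w_2,w_3}$, and hence $h_{\mathcal{B},\infty}$, by the three characterizing properties that Pink's formalism demands, while Lemma \ref{smallest} identifies $P_D$ as the smallest normal $\Q$-subgroup of $Q_D$ containing the image of $h_{\mathcal{B},\infty}$, which is exactly the group entering the construction. Once this matching is granted, no separate check of the Shimura datum axioms is needed, and the proposition follows.
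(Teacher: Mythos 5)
Your proposal is correct and follows essentially the same route as the paper: both defer to Pink's general formalism (relying on the preceding lemmas to identify the explicit objects with Pink's abstract construction), obtain connectedness of $X_D$ from the path-connectedness of the acting group via Lemma \ref{smallest}, and deduce the classification of boundary components from Proposition \ref{parabolic}. Your write-up is somewhat more explicit than the paper's (which compresses the matching with Pink's data into ``the general theory and our previous results'' and cites only $P_D(\R)$ for connectedness, leaving the connectedness of $U_D(\C)$ implicit), but the substance is identical.
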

\begin{proof}
The space $X_D$ is connected as $P_D(\R)$ is (Lemma \ref{smallest}). The general theory \cite[\S4.1-4.11]{Pi} and our previous results in this section imply that $(P_D,X_D)$ is a proper rational boundary component of $(G,X)$.  As any admissible parabolic subgroup is of the form $Q_{D'}$ (Proposition \ref{parabolic}) then any proper rational boundary component of $(G,X)$ is of the form $(P_{D'},X_{D'})$
\end{proof}
\begin{lem}\label{brutto}
The unipotent groups $W_D$ (Lemma \ref{smallest}) and $U_D$ (Definition \ref{data bound}) are of dimension respectively $3$ and $1$.
\end{lem}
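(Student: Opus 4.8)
The plan is to get $\dim W_D$ for free from the description already computed, and to get $\dim U_D$ by grading $\Lie W_D$ under the cocharacter $h_{\mathcal{B},\infty}\circ p$ that enters the definition of $U_D$.

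I would first handle $W_D$. By Lemma \ref{smallest} one has $W_D = R_u Q_D$, and Proposition \ref{parabolic} already writes down $\Lie R_u(Q_D)$ explicitly: its elements depend on exactly three free parameters $a_{23}, b_{23}, b_{13}$ (the $(1,2)$-entry being forced to $(-ba_{23}, bb_{23})$, hence determined by $a_{23}$ and $b_{23}$). So $\dim W_D = 3$ with nothing more to prove.

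For $U_D = \exp(W_{-2}\Lie P_D)$, I would decompose $\Lie P_D$ into its Levi (diagonal) part plus $\Lie W_D$. Since $h_{\mathcal{B},\infty}\circ p$ lands in the commutative Levi, conjugation by $h_{\mathcal{B},\infty}\circ p(t)$ fixes the Levi part pointwise; hence the subspace on which it acts by $t^2$ lies inside $\Lie W_D$, and it is enough to grade the three directions there. By Definition \ref{boundary morph} with $z_1=z_2=t$, the cocharacter $h_{\mathcal{B},\infty}\circ p(t)$ is the second cocharacter appearing in the proof of Lemma \ref{lemma}, so I would simply compute its adjoint action on $\Lie W_D$: the $(1,2)$- and $(2,3)$-directions ($a_{23}, b_{23}$) get scaled by $t$, while the corner $(1,3)$-direction ($b_{13}$) gets scaled by $t^2$. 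Thus $W_{-2}\Lie P_D$ is the line spanned by the $(1,3)$-direction, and $\dim U_D = 1$.

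The step needing the most care is this adjoint computation through Convention \ref{convention couple}. A convenient shortcut, which I expect to use, is that the same proof of Lemma \ref{lemma} shows $h_{\mathcal{B},\infty}\circ p$ is conjugate inside $Q_D(\C)$ to the genuinely diagonal cocharacter $t\mapsto \mathrm{diag}(t^2, t, 1)$; as the conjugating element normalizes $W_D$, the induced grading of $\Lie W_D$ is unchanged, and there the $(i,j)$-entry visibly has weight $w_i - w_j$ with $(w_1,w_2,w_3) = (2,1,0)$, reproducing weights $1,1,2$. The only genuine subtlety is to confirm that no vector of weight $t^2$ hides in the Levi part, but this is immediate since that part is fixed by the conjugation.
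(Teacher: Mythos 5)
Your computation of $\dim W_D$ is exactly the paper's argument: $W_D=R_uQ_D$ by Lemma \ref{smallest}, and the explicit description of $\Lie R_u Q_D$ in Proposition \ref{parabolic} has the three free parameters $a_{23},b_{23},b_{13}$. Your strategy for $U_D$ (grade by a cocharacter and identify the $t^2$-eigenspace with the corner line) is also the paper's. But your justification of the key containment $W_{-2}\Lie P_D\subseteq\Lie W_D$ rests on a false assertion: $h_{\mathcal{B},\infty}\circ p$ does \emph{not} land in the Levi. By Definition \ref{boundary morph}, $h_{\mathcal{B},\infty}\circ p(t)$ has the nonzero corner entry $(0,\frac{i(1-t^2)}{\sqrt{n}})$; it is precisely the non-diagonal second cocharacter in the proof of Lemma \ref{lemma}, as you yourself observe two sentences later. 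Consequently $\Ad\bigl(h_{\mathcal{B},\infty}\circ p(t)\bigr)$ neither fixes the diagonal Levi pointwise nor even preserves it: writing $h_{\mathcal{B},\infty}\circ p=u\,d\,u^{-1}$ with $d(t)=\mathrm{diag}(t^2,t,1)$ and $u=\exp(cE_{13})\in Q_D(\C)$ the conjugating element of Lemma \ref{lemma} (here $E_{13}$ is the elementary matrix with a single $1$ in the corner position), one computes $\Ad\bigl(h_{\mathcal{B},\infty}\circ p(t)\bigr)X=X+c(x_1-x_3)(t^2-1)E_{13}$ for $X=\mathrm{diag}(x_1,x_2,x_3)$. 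So both your opening claim and your closing sentence (``that part is fixed by the conjugation'') fail as written, and with them your proof that no weight-$t^2$ vector lies outside $\Lie W_D$.

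The gap is closed by running your own shortcut on all of $\Lie P_D$, not just on $\Lie W_D$. Since $u\in Q_D(\C)$ and both $P_D$ and $W_D$ are normal in $Q_D$, the operator $\Ad(u)$ preserves $\Lie P_D$ and $\Lie W_D$, and from $\Ad\bigl(h_{\mathcal{B},\infty}\circ p(t)\bigr)=\Ad(u)\Ad\bigl(d(t)\bigr)\Ad(u)^{-1}$ the weight spaces of $h_{\mathcal{B},\infty}\circ p$ in $\Lie P_D$ are the $\Ad(u)$-images of those of $d$. For the diagonal cocharacter $d$ the statement you wanted is true: the Levi of $P_D$ is diagonal (Lemma \ref{smallest}), hence pointwise fixed, i.e.\ of weight $0$, while $\Lie W_D$ carries weights $1,1,2$ exactly as you computed; thus the $t^2$-eigenspace of $d$ in $\Lie P_D$ is the corner line. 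Finally $\Ad(u)$ fixes that line, because $u=\exp(cE_{13})$ commutes with $E_{13}$. Hence $W_{-2}\Lie P_D$ is the corner line and $\dim U_D=1$. With this correction your proof is complete and is essentially the paper's: the paper obtains the same grading statement from the proof of Lemma \ref{cocharacter} (whose cocharacter $\lambda_D$ differs from $d$ by a central factor, so induces the same adjoint grading), justified there by \cite[\S 4.8--4.10]{Pi}, whereas you replace that citation by the explicit conjugation of Lemma \ref{lemma}.
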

\begin{proof}
By Lemma \ref{smallest}, $W_D$ coincides with the unipotent group $R_u Q_D$. Using the description of $R_u Q_D$ in Proposition \ref{parabolic} we have that $W_D$ has dimension $3$.  

Following the proof of Lemma \ref{cocharacter}, we have $\Lie R_u(Q_D)=(\Lie G)_a + (\Lie G)_{2a}$ and $\Lie U_{D} =(\Lie G)_{2a}$. Then we can conclude combining this with the description of $\Lie R_u Q_D$ in Proposition \ref{parabolic}.
\end{proof}
\begin{prop}\label{geom bound}
Let $(P_D,X_D)$ be the Shimura datum of Proposition \ref{data bound}, $W_D$ the unipotent radical of $P_D$ and $U_D$ the unipotent group as in Definition \ref{boundary data}. Consider the quotients of Shimura data (in the sense of \cite[Proposition 2.9]{Pi})

\[(P_D,X_D)/W_D = (P_D/W_D, X_D^1)\;\;
\textrm{and}\;\;
(P_D,X_D)/U_D = (P_D/U_D, X_D^2).\]
Then, as complex analytic variety, $X_D^1$ is a point, $X_D^2$ is an affine space of dimension $1$ and $X_D$ is an affine space of dimension $2.$

\end{prop}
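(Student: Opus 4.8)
The plan is to deduce the whole statement from the weight filtration on $\Lie P_D$ together with Pink's description of the complex structure of a mixed Shimura datum. By Lemma \ref{smallest} and Lemma \ref{brutto}, the unipotent radical $W_D=R_uQ_D$ has dimension $3$ and its weight-$(-2)$ piece $U_D$ (for the grading induced by $h_{\mathcal{B},\infty}\circ p$, Definition \ref{boundary data}) has dimension $1$, so that $V_D:=W_D/U_D$ has dimension $2$. First I would record the three graded pieces of $\Lie P_D$: the reductive quotient $\Lie(P_D/W_D)$ in weight $0$ (dimension $2$), $\Lie V_D$ in weight $-1$ (dimension $2$), and $\Lie U_D$ in weight $-2$ (dimension $1$), giving $\dim\Lie P_D=5$.

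For the pure quotient, the explicit description in Lemma \ref{smallest} shows that $P_D/W_D$ is a two-dimensional $\Q$-torus (with coordinate $z=a+ib$, acting diagonally as $(|z|^2,z,1)$). A pure Shimura datum whose group is a torus has zero-dimensional $X$; since $X_D^1$ is connected by Proposition \ref{data bound}, it is a single point. This gives the first assertion and identifies $X_D^1$ with $\mathrm{pt}$.

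To compute the dimensions of $X_D$ and $X_D^2$, I would use that for $x\in X_D$ the holomorphic tangent space is $\Lie P_{D,\C}/F^0\Lie P_{D,\C}$, where $F^{\bullet}$ is the Hodge filtration defined by $\Ad\circ h_{\mathcal{B},\infty}$. Decomposing each graded piece into Hodge types: the weight-$0$ piece is of type $(0,0)$, hence lies entirely in $F^0$ and contributes nothing; the weight-$(-1)$ piece $\Lie V_{D,\C}$ splits as a sum of the one-dimensional types $(0,-1)$ and $(-1,0)$, of which only $(-1,0)$ lies outside $F^0$, contributing $1$; and the weight-$(-2)$ piece $\Lie U_{D,\C}$ is of pure type $(-1,-1)$ (the only type allowed in weight $-2$ for a mixed Shimura datum, cf. the proof of Lemma \ref{cocharacter}), which lies outside $F^0$ and contributes $1$. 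Hence $\dim_{\C}X_D=2$, and discarding the $U_D$-direction gives $\dim_{\C}X_D^2=1$. Reading off these Hodge types from the matrix form of $h_{\mathcal{B},\infty}$ in Definition \ref{boundary morph} is the only genuine calculation in this part.

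Finally, for the affine-space structure I would invoke Pink's unipotent fibration $X_D\to X_D^2\to X_D^1$ (\cite[\S 4.8--4.11]{Pi}). Since $X_D^1$ is a point, $X_D^2\to X_D^1$ is a torsor under $V_D(\R)$ endowed with the complex structure coming from $\Lie V_{D,\C}/F^0\cong\C$, whence $X_D^2\cong\C$; and $X_D\to X_D^2$ is, by the very definition of $X_D$ as a $P_D(\R)U_D(\C)$-orbit (Definition \ref{boundary data}), a torsor under $U_D(\C)\cong\C$ on which $U_D(\C)$ acts freely, so over the Stein base $\C$ it is trivial and $X_D\cong\C^2$. \textbf{The main obstacle} is precisely this last step: one must check not only the dimensions but that these are \emph{full} affine spaces rather than bounded (tube) domains. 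The $U_D(\C)$-direction is all of $\C$ exactly because the orbit is formed with the full complex group $U_D(\C)$, but one still has to verify that the $V_D(\R)$-fiber is swept out entirely with no cone/positivity restriction, and that both actions are free with trivial torsors. Feeding the explicit shapes of $h_{\mathcal{B},\infty}$, $W_D$ and $U_D$ from Lemma \ref{smallest} and Proposition \ref{parabolic} into Pink's complex structure is what makes this step rigorous.
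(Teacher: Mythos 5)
Your proposal is correct and follows essentially the same route as the paper's proof: $X_D^1$ is a point because $P_D/W_D$ is commutative and the space is connected, and the affine-space structure of $X_D^2$ and $X_D$ comes from Pink's structure theory of unipotent extensions combined with the dimension count of Lemma \ref{brutto} and the triviality of the resulting line bundle (torsor) over a contractible base. Two minor remarks: your tangent-space computation via Hodge types is a valid cross-check but redundant, since the results you invoke (cited in the paper as \cite[\S2.18, 2.19]{Pi}, which is the correct reference rather than \S 4.8--4.11) already assert that $X_D^2$ is a $\C$-vector space of real dimension $\dim (W_D/U_D)=2$ and that $X_D \to X_D^2$ is a holomorphic vector bundle of rank $\dim U_D = 1$; in particular the ``main obstacle'' you flag (full affine space versus bounded or tube domain, freeness of the actions) is exactly what those sections of Pink settle, so nothing beyond the dimension counts of Lemma \ref{brutto} needs to be verified.
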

\begin{proof}
Consider the unipotent  quotients  $P_D\rightarrow P_D/U_D\rightarrow P_D/W_D.$ As $X_D$ is connected (Proposition \ref{data bound}), by \cite[Remark 2.9]{Pi} the topological spaces $X_D^1$ and $X_D^2$ are also connected. On the other hand $P_D/W_D$ is commutative, so $X_D^1$ is a finite number of points, hence a point.

 Then, by the general results on unipotent extensions \cite[\S2.18, 2.19]{Pi}, $X_D^2$ is a $\C$-vector space whose real dimension coincides with the one of $(W_D/U_D).$ We conclude using the Lemma \ref{brutto}.
 
 Again by \cite[\S2.18, 2.19]{Pi}, $X_D$ is a vector bundle over $X_D^2$ whose fiber have complex dimension coinciding with the one of $U_D$. By Lemma \ref{brutto}, this is a line bundle, and as $X_D^2$ is contractible, the line bundle is trivial.
 \end{proof}
\begin{cor}\label{BB}
Let $S$ be a Picard modular surface as defined in Definition \ref{defin datum}, $\partial S$ be the boundary of the Baily-Borel compactification of $S$ and $\partial S^T$ be the boundary  of the toroidal compactification of $S$. Then the Shimura data underlying the strata of $\partial S$ are of the form $(P_D/W_D, X_D^1)$ and the Shimura data underlying the strata $\partial S^T$ are of the form $(P_D/U_D, X_D^2)$. In particular, as complex varieties, $\partial S$ is a finite number $N$ of points and $\partial S^T$ is a disjoint union of $N$ smooth and proper curves of genus $1$.  
\end{cor}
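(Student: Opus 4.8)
The plan is to deduce both statements from Pink's general theory of compactifications of mixed Shimura varieties, feeding in the explicit description of the boundary data obtained in this section. The crucial input is Proposition \ref{data bound}: every proper rational boundary component of $(G,X)$ is of the form $(P_D,X_D)$ for an isotropic $E$-line $D$, together with the two quotient data $(P_D/W_D,X_D^1)$ and $(P_D/U_D,X_D^2)$ analysed in Proposition \ref{geom bound}. By the dictionary of \cite[Chapter 4]{Pi}, the strata of the Baily-Borel compactification are the Shimura varieties attached to the former and the strata of the toroidal compactification are those attached to the latter; this is the content of the general theory and yields the first half of the statement directly.

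For the Baily-Borel boundary I would first observe that, by Proposition \ref{geom bound}, $X_D^1$ is a single point. Hence the Shimura variety $Sh^{K'}(P_D/W_D,X_D^1)$ attached to each stratum is zero-dimensional, i.e. a finite set of points. Since the admissible parabolics are exactly the $Q_{D'}$ (Proposition \ref{parabolic}) and the neat level $K$ cuts out only finitely many cusps by the usual finiteness (class number) argument, there are finitely many strata. Therefore $\partial S$ is a finite set of points, whose cardinality we call $N$.

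For the toroidal boundary the strata carry the datum $(P_D/U_D,X_D^2)$, where $X_D^2$ is a one-dimensional affine space lying over the point $X_D^1$ (Proposition \ref{geom bound}). Here $P_D/U_D$ is the extension of the torus $P_D/W_D$ by the vector group $W_D/U_D$, and the latter sits in weight $-1$: indeed $U_D=\exp(W_{-2}\Lie P_D)$ is precisely the weight $-2$ part (Definition \ref{boundary data}), so passing to $P_D/U_D$ retains only the abelian (weight $-1$) direction over the pure base. By the theory of unipotent extensions \cite[\S 2.18, 2.19]{Pi}, this exhibits $Sh^{K'}(P_D/U_D,X_D^2)$ as an abelian scheme of relative dimension $\dim_{\C}X_D^2=1$ over the finite point set $Sh^{K'}(P_D/W_D,X_D^1)$. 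An abelian scheme of relative dimension one over a disjoint union of points is a disjoint union of elliptic curves, hence of smooth proper curves of genus $1$. Finally, since $U_D$ has dimension $1$ (Lemma \ref{brutto}), the cone governing the torus embedding in the $U_D$-direction is a single ray; the associated complete fan is unique, so exactly one boundary divisor of $S^T$ lies above each of the $N$ cusps. Hence $\partial S^T$ is a disjoint union of $N$ smooth proper genus $1$ curves.

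The step I expect to be the main obstacle is not the point-count for $\partial S$ but the identification of the toroidal strata as abelian varieties. One must follow the weight filtration carefully enough to be sure that $W_D/U_D$ lies in weight $-1$ — so that the quotient datum is of abelian, rather than merely semi-abelian or toric, type — and that the rank-one structure of $U_D$ forces a unique complete fan and hence a single boundary divisor over each cusp. Once these structural facts are secured, properness and the genus $1$ computation are immediate from the general theory, and the agreement of the number of components on the two compactifications follows from the fact that the canonical projection $S^T\to S^*$ maps each boundary curve onto a distinct cusp of $\partial S$.
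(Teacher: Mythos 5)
Your proposal is correct and follows the same overall skeleton as the paper (Pink's general theory combined with Propositions \ref{data bound} and \ref{geom bound}), but it handles the one genuinely delicate step by a different mechanism. The paper's proof is careful to note that the general theory only gives the toroidal strata the form $(P_D,X_D)/U_D^{\sigma}$ for \emph{some} subgroup $U_D^{\sigma}\subseteq U_D$ depending on the cone decomposition; it then pins down $U_D^{\sigma}=U_D$ by contradiction: if $U_D^{\sigma}$ were trivial the stratum would be $2$-dimensional by Proposition \ref{geom bound}, impossible since $S$ is a surface, and since $\dim U_D=1$ (Lemma \ref{brutto}) the only alternative is $U_D^{\sigma}=U_D$. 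You instead argue through the fan: because $U_D(\R)$ is $1$-dimensional, the only admissible rational polyhedral cone decomposition consists of the origin and a single ray, so the unique nontrivial cone forces $U_D^{\sigma}=U_D$ and gives exactly one boundary curve per cusp. Both arguments are valid; yours buys canonicity of the toroidal compactification and the matching of the count $N$ explicitly, while the paper's dimension count is shorter and avoids discussing fans at all. Be aware, though, that your first paragraph overclaims --- the ``dictionary'' of the general theory does \emph{not} directly attach $(P_D/U_D,X_D^2)$ to the toroidal strata, and your proof is only complete once the ray argument in your last paragraph is in place, so the logical order should be rearranged. You also flesh out the genus $1$ claim via Pink's theory of unipotent extensions (the stratum is a torsor under an abelian scheme of relative dimension $1$ over the cusp --- strictly a torsor rather than an abelian scheme, which makes no difference over a point), a point the paper leaves implicit after Proposition \ref{geom bound}.
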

\begin{rem}\label{stover}
The number $N$ in the previous proposition is computed in several cases in \cite{Stover}. Note also that the union of the $N$ points (or the union of the $N$ curves) is actually defined over the imaginary quadratic field $E$, but a priori each point is not. The geometry of $\partial S^T$ appears also in \cite{Lar}  and  \cite[Chapter 1]{Bell}.
\end{rem}
\begin{proof}
The general theory (\cite[Chapter 6]{Pi} and \cite[Lemma 1.7]{Wild1}), Proposition \ref{data bound} and Proposition \ref{geom bound} imply the Baily-Borel case, as well as the fact that the Shimura data underlying the strata of $\partial S^T$ are of the form  $(P_{D},X_{D})/U^{\sigma}_{D}$, with $U^{\sigma}_{D}$ a subgroup of the unipotent group $U_{D}$ (Definition \ref{boundary data}). 

If $U^{\sigma}_{D}$ were trivial, then the boundary would be of dimension $2$ by Proposition \ref{geom bound}, which is impossible as $S$ is a surface. On the other hand $U_{D}$ has dimension $1$ by Lemma \ref{brutto}, hence $U^{\sigma}_{D}=U_D$.
\end{proof}
\section{Degeneration of Hodge structures}
This section contains the main result, Theorem \ref{thm final}. We study how variations of Hodge structures over a Picard modular surface degenerate through the cusps of its Baily-Borel compactification. More precisely, we describe the types of the Hodge structures $R^ki^*j_*\mu_S(F)$ for all $G$-representations $F$ (see Notation \ref{notation mu}). Remark \ref{rem mu} shows that these structures have a geometric interest.

We start by reducing the problem to a combinatorial question (\ref{arit triv}-\ref{Kostant}). The main ingredient is a theorem of Burgos and Wildeshaus, which in this case has a simplified version via Lemma \ref{arit triv}. We then deal with this combinatorial question (\ref{roots}-\ref{dim1})
and finally describe the Hodge structures we are interested in (\ref{res1}-\ref{remfinal}). The last part (\ref{rappresentazioni}-\ref{finito}) explains how to deduce  Theorem \ref{peso abeliano} and Corollary \ref{cor intro} from Theorem \ref{thm final}.

\begin{npar}{Notation}\label{notation mu} Let $S$ be a Picard modular surface (Definition \ref{defin datum}), $S^*$ be its Baily-Borel compactification and 
$j:S \hookrightarrow S^*$
be the canonical open immersion. By Corollary \ref{BB} the boundary of $S^*$ is a finite set of points. Let us fix  one of these points, and let 
$i:\pt \hookrightarrow S^*$ be the inclusion.

Let $(G,X)$ be the Shimura datum underlying $S$ (Definition \ref{defin datum}),
\[(G_{\pt},X_{\pt}) =(P_D/W_D, X_D^1)\] be the Shimura datum underlying (the stratum containing) $\pt$ (Corollary \ref{BB}) and
 $Q_D \supset P_D$ be the corresponding parabolic subgroup (see Proposition \ref{parabolic} and Lemma \ref{smallest}). Recall that
 $P_D$ and $Q_D$ have the same unipotent radical $W_D$ (Lemma \ref{smallest}).

By \cite[1.18]{Pi} (see also \cite[\S 2]{BW}), there are linear tensor functors 
\[ \mu_S:\Rep_{G,\Q} \longrightarrow \VHS(S)_{\Q}\;\;
\textrm{and}\;\;
\mu_{\pt}:\Rep_{G_{\pt},\Q} \longrightarrow \HS(\pt)_{\Q},\]
(called \textit{canonical construction} functors)
from the $\Q$-representations of $G$ (resp. $G_{\pt}$)  to (admissible) variations of $\Q$-Hodge structures over $S$ (resp. over $\pt$). 

The functor
$i^*j_*: \VHS(S)_{\Q} \longrightarrow \HS(\pt)_{\Q}$
from (admissible) variations of $\Q$-Hodge structures over $S$ to $\Q$-Hodge structures over $\pt$ is left exact and, for any integer $k\geq 0$, we write 
\[R^ki^*j_*: \VHS(S)_{\Q} \longrightarrow \HS(\pt)_{\Q}\]
for the derived functor. Note that $R^ki^*j_*=i^* R^kj_*.$
\end{npar}
\begin{rem}\label{rem mu}
Let $V$ be as in Notation \ref{basic} and let $f:A\longrightarrow S$ be the universal abelian scheme (Remark \ref{universal}). Then one has a canonical identification \[\mu_S (V^{\vee})=R^1f_*{\Q_A}.\]
In particular, as $\mu_{S}$ is a tensor functor, all the relative cohomology sheaves of any $r$-fold fiber product of $A$ over $S$ are in the image of the functor $\mu_{S}$ (as well as several interesting direct factors of these sheaves, e.g. primitive parts with respect to a Lefschetz decomposition).

\end{rem}
\begin{lem}\label{arit triv}
The $\Q$-algebraic group $Q_D/W_D$ is isogenous to the direct product of a compact torus and a $\Q$-split torus. In particular, any neat arithmetic subgroup of  $Q_D/W_D(\Q)$ is trivial.
\end{lem}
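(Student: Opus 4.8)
The plan is to understand the structure of the group $Q_D/W_D$ explicitly, using the matrix descriptions already established in Proposition \ref{parabolic} and Lemma \ref{smallest}. Recall that $W_D = R_u Q_D$ is the unipotent radical, so $Q_D/W_D$ is a reductive group, in fact a torus, identified with the Levi quotient. From the block-upper-triangular description of $Q_D$ in Proposition \ref{parabolic} and the maximal torus $T_{m,D}$, one sees that $Q_D/W_D \cong T_{m,D}$ as a group over $\C$ (this isomorphism is noted in the proof of Proposition \ref{parabolic}). So the first step is to describe $Q_D/W_D$ as a $\Q$-torus and identify its character lattice together with the Galois action of $\Gal(E/\Q) = \Gal(\C/\R)$ coming from the fact that $E$ is imaginary quadratic.

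Next I would decompose this torus according to its behaviour under complex conjugation. The relevant invariants are: the multiplier character $\chi$ (the factor $\mathbb{G}_m$ recording the scalar by which $J$ is scaled, which is $\Q$-split since the scalar is a rational number), and the diagonal entries recording the action on the three lines of the parabolic flag. The key structural fact is that $Q_D/W_D$ is a torus of dimension $3$ (since $\dim Q_D = 7$ and $\dim W_D = 3$ by Lemma \ref{brutto}), and I expect it to split as an almost-direct product of a $\Q$-split part coming from the multiplier and the action on the isotropic line $D$, and a norm-one (hence compact, anisotropic) part $\Res^1_{E/\Q}\mathbb{G}_m = U(1)$ coming from the middle factor, which acts on the positive-definite line via the hermitian form. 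Concretely, the middle block $(\frac{z_1+z_2}{2},\frac{z_1-z_2}{2i\sqrt n})$ in $T_{m,D}$, subject to the unitarity constraint, is exactly a one-dimensional unitary group, whose real points form a circle and which is $\R$-anisotropic. Thus I would exhibit an isogeny $Q_D/W_D \sim U(1) \times \mathbb{G}_m^{\,?}$, where the split factor accounts for the remaining rank.

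The hard part, or at least the point requiring care, is making precise the claim that the isogeny is onto a product of a \emph{compact} torus and a \emph{$\Q$-split} torus, i.e. checking that there is no nontrivial non-split, non-compact ($\Q$-isotropic but $\R$-anisotropic over a real place, or split over $\R$ but not over $\Q$) part. For a torus over $\Q$, the relevant dichotomy is governed by the character lattice as a $\Gal(\overline{\Q}/\Q)$-module; here everything is defined over the quadratic field $E$, so the Galois action factors through $\Gal(E/\Q)$, and one checks that each rank-one summand of the cocharacter lattice is either fixed (giving a $\Q$-split $\mathbb{G}_m$) or sent to its negative by conjugation (giving the norm-one torus $U(1)$, which is $\R$-anisotropic and hence compact at the real place). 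The absence of a mixed factor follows from this rank computation together with the explicit relation among the four parameters $\lambda_1,\dots,\lambda_4$ imposed in $T_{m,D}$.

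Finally, for the second assertion I would invoke the general structure theory of arithmetic groups: a torus that is isogenous to (compact torus) $\times$ ($\Q$-split torus) has the property that its group of real points modulo the maximal compact subgroup is a product of copies of $\R_{>0}$, so any arithmetic subgroup is finite; and a neat subgroup is torsion-free by definition \cite[\S0.5]{Pi}, whence trivial. Concretely, the compact factor contributes a group whose arithmetic subgroups are finite (a lattice in a compact group is finite), and the $\Q$-split factor $\mathbb{G}_m^d$ has $\mathbb{G}_m^d(\Z) = \{\pm 1\}^d$ finite, so an arithmetic subgroup of the product is commensurable with a finite group, hence finite; neatness then forces triviality. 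I expect the main obstacle to be purely bookkeeping: pinning down the exact Galois action on the character lattice and verifying there is no anisotropic-over-$\Q$-but-noncompact-at-$\R$ summand, after which both conclusions are immediate.
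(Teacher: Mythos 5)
Your overall strategy --- identify $Q_D/W_D$ with the Levi torus $T_{m,D}$, decompose its character lattice under the $\Gal(E/\Q)$-action into fixed and sign-reversed parts, and deduce triviality of neat arithmetic subgroups from the compact and split factors separately --- is in substance the paper's argument: the paper simply writes down the resulting isogeny explicitly, namely $\mathbb{G}_{m,\Q}^2\times T_n^2 \longrightarrow Q_D/W_D$, where $T_n=\{(a,b)\, ,\, a^2+nb^2=1\}$ is precisely the norm-one torus $\Res^1_{E/\Q}\mathbb{G}_m$ that you call $U(1)$. Your treatment of the second assertion (an arithmetic subgroup of a compact-by-split torus is finite, and neatness then forces triviality) is correct and in fact more detailed than the paper, which states that implication without proof.

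There is, however, a concrete error in your structural claims, and it is not mere bookkeeping. First, $\dim Q_D/W_D=\dim Q_D-\dim W_D=7-3=4$, not $3$; this is forced by your own identification $Q_D/W_D\cong T_{m,D}$, since $T_{m,D}$ is a \emph{maximal} torus of $G$ and $G$ has rank $4$ (Remark \ref{extension}). Second, and this is what the arithmetic slip hides, your anticipated decomposition (split part of rank $2$ from the multiplier and the action on $D$, compact part of rank $1$ from the middle line) is missing a compact factor: the action of the Levi on the isotropic line $D$ is \emph{not} $\Q$-split. An element of the Levi may act on $D$ by an arbitrary scalar $\alpha\in E^{\times}$; writing $\beta$ for its action on the middle (positive-definite) line, the multiplier is $\chi=N_{E/\Q}(\beta)$ and the action on $V/D^{\perp}$ is forced to be $\chi/\overline{\alpha}$, so the Levi is parametrized freely by $(\alpha,\beta)\in\bigl(\Res_{E/\Q}\mathbb{G}_m\bigr)^2$. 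Each of the two factors $\Res_{E/\Q}\mathbb{G}_m$ is isogenous to $\mathbb{G}_m\times T_n$, so the correct decomposition up to isogeny is $\mathbb{G}_m^2\times T_n^2$: in particular the norm-one elements of $E^{\times}$ acting on $D$ form a second compact circle sitting inside what you declared to be the split part. Had you carried out the character-lattice computation you propose, you would have found rank $4$ with complex conjugation acting by two coordinate swaps, and the argument would close exactly as you intend; but as written, the intermediate structure you assert is false, and any attempt to exhibit your rank-$3$ isogeny would fail (it cannot be surjective onto a $4$-dimensional torus).
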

\begin{proof}
Consider the compact torus $T_n=\{(a,b)\, , \, a^2+nb^2=1\},$ defined over $\Q$ (here $n$ is the integer of Notation \ref{basic}).  Then use the isogeny
$\mathbb{G}_{m,\Q}^2\times T_n^2 \longrightarrow Q/W_1$
given (using Convention \ref{convention couple}) by
\[ 
(\lambda, \lambda', (a,b),(a',b'))
\mapsto
\left(\begin{array}{ccc}
\lambda \lambda'(a,b) & & \\
 & \lambda'(a' , b') &  \\
& & \lambda^{-1}\lambda '(a , b)^{-1}
\end{array}\right).
\]

\end{proof}

\begin{thm}\label{BW}
For any $F \in \Rep_G$, there is a canonical isomorphism of $\Q$-Hodge structures over $\pt$
\[R^k i^{*}j_{*}(\mu_S(F))=\mu_{\pt}((H^k(W_D,F_{|Q_D}))_{|G_{\pt}}),\] 
where $F_{|Q_D}$ is $F$ seen as representation of $Q_D$ and $H^{k}(W_D, \cdot)$ is the $k$-th derived of the functor that associates to a $Q_D$-representation its $W_D$-invariant part, and $\cdot_{|G_{\pt}}$ is again a restriction functor.\end{thm}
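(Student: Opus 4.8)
The plan is to deduce the statement entirely from the general theorem of Burgos and Wildeshaus \cite{BW}, whose very purpose is to compute $i^*R^kj_*$ of a canonically constructed variation in terms of cohomology of the groups attached to the parabolic $Q_D$ and its Levi quotient $Q_D/W_D$; the feature of the Picard case that makes the output so clean is Lemma \ref{arit triv}. First I would verify that the hypotheses of \cite{BW} apply here: the sheaf $\mu_S(F)$ is an admissible variation coming from the canonical construction, $j$ is the open immersion of $S$ into its Baily--Borel compactification, $K$ is neat, and $i$ is the inclusion of a zero-dimensional stratum whose underlying Shimura datum is $(P_D/W_D, X_D^1)=(G_{\pt},X_{\pt})$ (Corollary \ref{BB}). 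These are exactly the data the theorem of \cite{BW} addresses.

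Second, I would recall the shape of the general formula. For a boundary stratum attached to the admissible parabolic $Q_D$ with unipotent radical $W_D$, \cite{BW} expresses $i^*R^kj_*\mu_S(F)$ through two successive operations: an algebraic (Lie-theoretic) cohomology $H^\bullet(W_D, F_{|Q_D})$, producing a representation of the Levi $Q_D/W_D$, followed by the cohomology $H^\bullet(\Gamma,-)$ of a neat arithmetic subgroup $\Gamma$ of $(Q_D/W_D)(\Q)$ governing the geometry of the link, the whole being transported back to the point by the canonical construction $\mu_{\pt}$. Schematically the two layers are assembled by a spectral sequence of the form $H^a(\Gamma, H^b(W_D, F_{|Q_D})) \Rightarrow H^{a+b}(\ldots)$.

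The decisive simplification is the input of Lemma \ref{arit triv}: since $Q_D/W_D$ is isogenous to the product of a compact torus and a $\Q$-split torus, every neat arithmetic subgroup $\Gamma$ is trivial. Hence $H^a(\Gamma,-)$ vanishes for $a>0$ and is the identity for $a=0$, so the spectral sequence degenerates at the first page and the group-cohomology layer disappears entirely. What survives is precisely $H^k(W_D, F_{|Q_D})$, a representation of $Q_D/W_D$; restricting it along the inclusion $G_{\pt}=P_D/W_D \hookrightarrow Q_D/W_D$ and applying $\mu_{\pt}$ yields the asserted Hodge structure, canonicity being inherited from the canonicity of each step in \cite{BW}.

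I expect the genuine difficulty to lie not in this paper's argument but in correctly invoking \cite{BW}: one must unwind their statement far enough to be certain that, once the arithmetic group is trivial, there remain no residual weight filtrations, Tate twists, or intermediate-extension subtleties left unaccounted for, so that the clean equality $R^ki^*j_*\mu_S(F)=\mu_{\pt}((H^k(W_D,F_{|Q_D}))_{|G_{\pt}})$ holds on the nose rather than only on associated graded pieces. The new mathematical input that reduces the problem to a purely unipotent-cohomology computation is Lemma \ref{arit triv}; everything else is bookkeeping inside the cited theorem.
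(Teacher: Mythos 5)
Your proposal is correct and follows essentially the same route as the paper: the paper's proof likewise consists of invoking the theorem of Burgos--Wildeshaus (\cite[Theorem 2.9]{BW}) and observing that the arithmetic group appearing there is trivial by Lemma \ref{arit triv}, which collapses the group-cohomology layer and leaves exactly the unipotent cohomology $H^k(W_D, F_{|Q_D})$ restricted to $G_{\pt}$. Your account simply spells out in more detail the two-layer structure and the degeneration that the paper's one-line proof takes for granted.
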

\begin{proof}
This is \cite[Theorem 2.9]{BW} in a simplified version, that holds because the arithmetic group appearing in loc. cit. has to be trivial by Lemma \ref{arit triv}.
\end{proof}
\begin{npar}{Notation}\label{weight}
For any reductive group $H$, we will write $F_{\lambda,H}$ for the  $H$-irreducible representation whose maximal weight is $\lambda$. We will simply write $F_{\lambda}$ if the group $H$ can be deduced from the context.
\end{npar}
\begin{thm}[Kostant Theorem, see \cite{War} thm 2.5.2.1]\label{Kostant}
Let $H$ be a reductive group over $\C$, $B$ a Borel subgroup with unipotent radical $W$, $\Phi$ the associated root system, $\Phi^+$ the subset of the positive ones, $\rho$ the half of the sum of positives roots, and $\mathcal{R}$ be the Weyl group. For any $\sigma \in \mathcal{R}$, define the length of $\sigma$ as $l(\sigma)=\# \{\alpha \in \Phi^+ \, , \, \sigma^{-1}\alpha \notin \Phi^+\}.$ Then one has an equality of $B/W$-representations
  $H^k(W,{F_{\lambda, H}}_{|B})=\oplus_{l(\sigma)=k} F_{\sigma(\lambda+\rho)-\rho,B/R} $
\end{thm}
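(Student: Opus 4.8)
This is a classical, well-known result, so the plan is to recall its standard proof rather than to devise a new one. The proof proceeds through the Lie-algebraic computation of $H^k(W, \cdot)$ via the Chevalley–Eilenberg complex and Kostant's harmonic-theoretic analysis of it.

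First I would reduce the group cohomology $H^k(W, \cdot)$ to Lie algebra cohomology. Since $W$ is a connected unipotent group over a field of characteristic zero, the functor of $W$-invariants on a $W$-representation coincides with the functor of invariants under its Lie algebra $\mathfrak{n} = \Lie W$, and the derived functors agree: $H^k(W, M) = H^k(\mathfrak{n}, M)$. The latter is computed by the Chevalley–Eilenberg complex $\Hom_{\C}(\Lambda^\bullet \mathfrak{n}, M) = \Lambda^\bullet \mathfrak{n}^\vee \otimes M$, with its standard differential. The reductive quotient $B/W$ (a maximal torus, up to the unipotent part of $B$) acts on each term, so the entire complex and its cohomology are complexes of $B/W$-representations; thus it suffices to identify the cohomology as such representations.

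The heart of the argument is Kostant's explicit computation of this cohomology. The plan is to equip $\mathfrak{n}^\vee$ with the natural pairing coming from the Killing form and introduce a Laplacian-type operator $L = d d^* + d^* d$ on the complex, where $d^*$ is the adjoint of the Chevalley–Eilenberg differential $d$. Hodge theory for this Laplacian identifies the cohomology $H^k(\mathfrak{n}, F_\lambda)$ with the space of harmonic elements $\ker L$ in degree $k$. Kostant's key computation gives an explicit formula for $L$ in terms of the Casimir element, which shows that the harmonic space in degree $k$ decomposes as a direct sum of irreducible $B/W$-modules indexed precisely by the Weyl group elements $\sigma$ of length $l(\sigma)=k$, each contributing the highest weight $\sigma(\lambda+\rho)-\rho$. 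The combinatorial bookkeeping rests on the fact that the weights $\sigma(\lambda+\rho)-\rho$ for distinct $\sigma$ are distinct and dominant for $B/W$, and that $l(\sigma)$ counts exactly the positive roots sent to negative ones by $\sigma^{-1}$, which matches the exterior-power degree in which the corresponding weight vector lives.

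The main obstacle is this harmonic computation: showing that the Laplacian $L$ acts on the weight space of weight $\mu$ in $\Lambda^k\mathfrak{n}^\vee\otimes F_\lambda$ by a scalar that vanishes exactly when $\mu+\rho$ is $\mathcal{R}$-conjugate to $\lambda+\rho$ and the conjugating element has length $k$. This is precisely the content of Kostant's original paper, and I would not reproduce it; since the statement cites \cite{War}, I would simply invoke Theorem 2.5.2.1 there, whose proof carries out exactly this harmonic analysis. The remaining steps — the identification of group cohomology with Lie algebra cohomology, and the verification that everything is compatible with the $B/W$-action — are routine once the harmonic decomposition is in hand.
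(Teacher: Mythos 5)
The paper gives no proof of this statement at all: it is quoted as a classical theorem with a pointer to \cite{War}, Theorem 2.5.2.1, which is exactly where your proposal also ends up after sketching the standard reduction. Your outline (rational cohomology of the connected unipotent group $W$ agrees with Lie algebra cohomology of $\Lie W$ in characteristic zero, computed via the Chevalley--Eilenberg complex, with Kostant's Laplacian/Casimir argument identifying the harmonic representatives indexed by Weyl elements of length $k$) is a correct description of the cited proof, so your treatment is essentially the same as the paper's.
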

\begin{npar}{Lengths of roots}\label{roots}
Let $\mathcal{B}$ be a parabolic basis adapted to $D$ (Definition \ref{parabolic basis}). We have an isomorphism 
$\phi_{\mathcal{B}}:G_{\C} \isocan \GL_{3,\C}\times \mathbb{G}_{m,\C}$
from Remark \ref{extension}. Write $T_s \subset\GL_{3,\C}$ for the subgroup of upper-triangular matrices and $\Delta \subset\GL_{3,\C}$ for the diagonal ones. To describe the root system let us choose $\Delta \times \mathbb{G}_{m,\C}\cong \mathbb{G}^4_{m,\C}$ as maximal torus, and $T_s\times \mathbb{G}_{m,\C}$ as Borel subgroup containing  the torus. Note that $\phi_{\mathcal{B}}$ restricts to an isomorphism:
$\phi_{\mathcal{B}}:Q_{D,\C} \isocan T_s\times \mathbb{G}_{m,\C}.$
We write  $\lambda_1,\ldots,\lambda_4$ for the four standard characters, which together form a basis for the lattice of characters. We also write $e_{ij}=\lambda_i\lambda_j^{-1}$. The simple roots are $e_{12}$ and $e_{23}$, the other positive root is $e_{13},$ and so $\rho=e_{13}.$ 

The Weyl group $\mathcal{R}$ is the group of permutations of the first three coordinates of the characters. We will write elements of $\mathcal{R}$ with the standard notations for permutations; their lengths are given by 
\[l(e)=0 \, , \, l(12)=l(23)=1  \, , \,  l(123)=l(132)=2  \, , \, l(13)=3.\]
\end{npar}
\begin{npar}{Computation}\label{Computation}
We keep notations from \ref{roots}. Let $\lambda=(a,b,c,d)$ be any character written in the basis fixed in \ref{roots}. Note that we have $\rho=(1,0,-1,0).$ 

Let us compute $\sigma(\lambda+\rho)-\rho$ for all permutations $\sigma \in \mathcal{R}$.
\[\begin{array}{r @{\textrm{\,\,=} \,\,} l}
e(a+1,b,c-1,d)-(1,0,-1,0)  &(a,b,c,d),\\
(12)(a+1,b,c-1,d)-(1,0,-1,0)&(b-1,a+1,c,d),\\
(23)(a+1,b,c-1,d)-(1,0,-1,0)&(a,c-1,b+1,d),\\
(13)(a+1,b,c-1,d)-(1,0,-1,0)&(c-2,b,a+2,d),\\
(123)(a+1,b,c-1,d)-(1,0,-1,0)&(c-2,a+1,b+1,d),\\
(132)(a+1,b,c-1,d)-(1,0,-1,0)&(b-1,c-1,a+2,d).
\end{array}\]
By Theorem \ref{Kostant}, we deduce the following equalities of $Q_{D,\C}/W_{D,\C}$-representations:

\[\begin{array}{c @{\textrm{\,\,=} \,\,} l}
H^0(W_{D,\C},{F_{\lambda,G_{\C}}}_{|Q_{D,\C}}) & F_{(a,b,c,d)},\\
H^1(W_{D,\C},{F_{\lambda,G_{\C}}}_{|Q_{D,\C}}) & F_{(b-1,a+1,c,d)} \oplus F_{(a,c-1,b+1,d)},\\
H^2(W_{D,\C},{F_{\lambda,G_{\C}}}_{|Q_{D,\C}}) & F_{(c-2,a+1,b+1,d)} \oplus F_{(b-1,c-1,a+2,d)},\\
H^3(W_{D,\C},{F_{\lambda,G_{\C}}}_{|Q_{D,\C}}) & F_{(c-2,b,a+2,d)},
\end{array}\]
and $H^k(W_{D,\C},{F_{\lambda,G_{\C}}}_{|Q_{D,\C}})=0,$  for $k\geq 4$ (we are following Notation \ref{weight}).
\end{npar}
\begin{rem}\label{dim1}
As $Q_{D,\C}/W_{D,\C}$ is isomorphic to the torus $\mathbb{G}^4_{m,\C}$ (see \ref{roots}),  the irreducible representations of $Q_{D,\C}/W_{D,\C}$ are just characters. In particular the six representations on the right hand side above are $1$-dimensional and explicit.
\end{rem}

\begin{npar}{Restriction to $\D$, the types}\label{res1}
Consider $h_{\mathcal{B},\infty}:\D_{\C}\rightarrow Q_{D,\C}$ of Definition \ref{boundary morph}, the induced map
$\overline{h_{\mathcal{B},\infty}}:\D_{\C}\rightarrow Q_{D,\C}/W_{D,\C},$
and the map $\phi_{\mathcal{B}}: Q_{D,\C}/W_{D,\C} \isocan \mathbb{G}^4_{m,\C}$ defined in \ref{roots}.
The composition $\phi_{\mathcal{B}}\circ \overline{h_{\mathcal{B},\infty}}:\D_{\C}\rightarrow \mathbb{G}^4_{m,\C}.$
is then given by
\[phi_{\mathcal{B}}\circ \overline{h_{\mathcal{B},\infty}}:(z_1,z_2)\mapsto (z_1z_2,z_1,1,z_1z_2).\] 

Hence, for any character $\lambda=(a,b,c,d)$ (written in the basis fixed in \ref{roots}), one deduces from \ref{Computation} the following equalities of $\D_{\C}$-representations

\[\begin{array}{c @{\textrm{\,\,=} \,\,} l}
H^0(W_{D,\C},{F_{\lambda,G_{\C}}}_{|Q_{D,\C}})_{\D_{\C}} & F_{(a+b+d,a+d)},\\
H^1(W_{D,\C},{F_{\lambda,G_{\C}}}_{|Q_{D,\C}})_{\D_{\C}} & F_{(a+b+d,b+d-1)} \oplus F_{(a+c+d-1,a+d)},\\
H^2(W_{D,\C},{F_{\lambda,G_{\C}}}_{|Q_{D,\C}})_{\D_{\C}} & F_{(a+c+d-1,c+d-2)}\oplus F_{(b+c+d-2,b+d-1)},\\
H^3(W_{D,\C},{F_{\lambda,G_{\C}}}_{|Q_{D,\C}})_{\D_{\C}} & F_{(b+c+d-2,c+d-2)}
\end{array}\]
and $H^k(W_{D,\C},{F_{\lambda,G_{\C}}}_{|Q_{D,\C}})_{\D_{\C}}=0,$  for $k\geq 4$ (we are following Notation \ref{weight}).
\end{npar}
\begin{thm}\label{thm final}
Let $F_{\lambda}$ be the irreducible representation of $G_{\C}$ of maximal weight $\lambda=(a,b,c,d)$ (written in the basis fixed in \ref{roots}). Then the following holds:
\begin{enumerate}
\item $R^0i^*j_*\mu_{S,\C}(F_{\lambda})$ has type $(-a-b-d,-a-d)$,
\item $R^1i^*j_*\mu_{S,\C}(F_{\lambda})$ has types $(-a-b-d,-b-d+1)$ and $(-a-c-d+1,-a-d)$,
\item $R^2i^*j_*\mu_{S,\C}(F_{\lambda})$ has types $(-a-c-d+1,-c-d+2)$ and $(-b-c-d+2,-b-d+1)$,
\item $R^3i^*j_*\mu_{S,\C}(F_{\lambda})$ has type $(-b-c-d+2,-c-d+2)$,
\item $R^ki^*j_*\mu_{S,\C}(F_{\lambda})$ vanishes for $k\geq 4$.
\end{enumerate}
\end{thm}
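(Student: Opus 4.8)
The plan is to assemble Theorem \ref{thm final} directly from the ingredients already developed in the section, treating it as essentially a bookkeeping exercise that translates the cohomological computation of \ref{Computation}--\ref{res1} into statements about Hodge types. The starting point is Theorem \ref{BW}, which gives the canonical isomorphism
\[R^k i^*j_*\mu_{S,\C}(F_\lambda)=\mu_{\pt}\bigl((H^k(W_{D,\C},{F_{\lambda,G_{\C}}}_{|Q_{D,\C}}))_{|G_{\pt,\C}}\bigr).\]
Thus the entire content reduces to: (i) identifying the $Q_{D,\C}/W_{D,\C}$-representations $H^k(W_{D,\C},\cdot)$, and (ii) reading off the Hodge type of the image under $\mu_{\pt}$ of each summand.

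For step (i), I would simply invoke \ref{Computation}, which already lists all the $H^k(W_{D,\C},{F_{\lambda,G_{\C}}}_{|Q_{D,\C}})$ as explicit direct sums of characters of $Q_{D,\C}/W_{D,\C}\cong\mathbb{G}^4_{m,\C}$ (Remark \ref{dim1}), and which is itself an immediate consequence of Kostant's Theorem \ref{Kostant} applied with the root data and Weyl-group lengths worked out in \ref{roots}. The vanishing for $k\geq 4$ is then automatic, giving clause (5).

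For step (ii), the key observation is that the Hodge type of $\mu_{\pt}$ applied to a character is governed by how that character pulls back along the cocharacter $h_{\mathcal{B},\infty}$ of Deligne's torus, exactly as computed in \ref{res1}: there the composite $\phi_{\mathcal{B}}\circ\overline{h_{\mathcal{B},\infty}}$ is shown to send $(z_1,z_2)\mapsto(z_1z_2,z_1,1,z_1z_2)$, so a character $(p,q,r,s)$ of $\mathbb{G}^4_{m,\C}$ becomes the $\D_{\C}$-character $(p+q+s,\,p+s)$ (recording the exponents of $z_1$ and $z_2$). The content of \ref{res1} is precisely the list of these restricted $\D_{\C}$-characters for each summand; the only remaining point is the sign convention relating a $\D_{\C}$-character $(m_1,m_2)$ to the Hodge type $(-m_1,-m_2)$ under the canonical construction functor $\mu_{\pt}$. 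Applying this sign flip termwise to the four lines of \ref{res1} yields clauses (1)--(4) verbatim.

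The main (and essentially only) obstacle is this last normalization: one must be careful that the convention built into $\mu_{\pt}$ for passing from a representation of Deligne's torus to a Hodge structure introduces the sign change $(m_1,m_2)\mapsto(-m_1,-m_2)$ in the types, matching the general recipe of \cite[\S2]{BW}. Once this convention is fixed consistently with the definition of $\mu_{\pt}$ in Notation \ref{notation mu}, the proof is a purely mechanical substitution, and I would phrase it as: combine Theorem \ref{BW} with the list of $\D_{\C}$-representations in \ref{res1}, and read off the Hodge types via the canonical construction, which completes the proof.
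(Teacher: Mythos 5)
Your proposal is correct and follows exactly the paper's own route: the paper's proof is literally ``This is Theorem \ref{BW} with the computations done in \ref{res1}'', i.e.\ the Burgos--Wildeshaus isomorphism combined with the Kostant computation of \ref{Computation} restricted along $\phi_{\mathcal{B}}\circ\overline{h_{\mathcal{B},\infty}}$, and your sign normalization $(m_1,m_2)\mapsto(-m_1,-m_2)$ for passing from $\D_{\C}$-weights to Hodge types matches the convention implicit in $\mu_{\pt}$. The only difference is that you spell out the bookkeeping the paper leaves tacit, which if anything makes the argument more transparent.
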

\begin{proof}
This is Theorem \ref{BW} with the computations done in \ref{res1}.
\end{proof}
\begin{rem}\label{remfinal}
Note that the functor $R^ki^*j_*\mu_{S,\C}$ is linear, hence the types of $R^ki^*j_*\mu_{S,\C}(F)$ can be computed for any representation $F$ once we know its decomposition into irreducible factors. 

Note also that all $\C$-representations $F_{\lambda}$ are defined over $E$ (as $G$ splits over $E$, see Remark \ref{extension}), but not over $\Q$. If we start with a $\Q$-irreducible representation $F$, then $F_E$ will decompose in general in two factors, say $F_E = F^1 \oplus F^2$ (e.g. $V_E=F_{1,0,0,0}\oplus F_{0,0,-1,1}$), and the types of $R^ki^*j_*\mu_{S,\C}(F_{\C}^1 \oplus F_{\C}^2)$ will respect the Hodge symmetry.
\end{rem}
\begin{lem}\label{rappresentazioni}
Let $p$ and $r$ be two non-negative integers, and let us keep notations from \ref{roots}. The $G_{\C}$-irreducible representations contained in
$\bigwedge^p (F^{\oplus r}_{0,0,-1,0}\oplus F^{\oplus r}_{1,0,0,-1})$ are exactly the $F_{a,b,c,d}$ verifying
\begin{enumerate}
\item $r \geq a\geq b \geq c\geq -r$,
\item $3r+a_-+b_-+c_-\geq -d\geq a_++b_++c_+$, and
\item $a+b+c+2d=-p$,
\end{enumerate}
where, for any integer $x$, we define $x_+$ (resp. $x_-$) as $x$ itself if it is positive (resp. if it is negative) and $0$ otherwise.
\end{lem}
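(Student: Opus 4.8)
The plan is to compute the decomposition of the exterior power directly as a $\GL_{3,\C}\times\mathbb{G}_{m,\C}$-representation, using the isomorphism $G_\C\cong\GL_{3,\C}\times\mathbb{G}_{m,\C}$ of Remark \ref{extension} and the weight conventions of \ref{roots}. First I identify the two summands: $F_{1,0,0,-1}$ is the standard representation $U$ of $\GL_{3,\C}$ (highest weight $\lambda_1$) on which $\mathbb{G}_m$ acts by $t\mapsto t^{-1}$, while $F_{0,0,-1,0}$ is its dual $U^\vee$ with trivial $\mathbb{G}_m$-action; this matches Remark \ref{rem mu} and \ref{remfinal}, since $R^1f_*\Q_A=\mu_S(V^\vee)$ and $(V^\vee)_\C\cong F_{0,0,-1,0}\oplus F_{1,0,0,-1}$, so that the representation of the lemma is the one underlying $R^pf_*\Q_{A^r}$ via K\"unneth. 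Because $\mathbb{G}_m$ is central it acts by a scalar on each irreducible constituent, so I grade $\bigwedge^p W$ by the number $q$ of tensor factors taken from the $U^{\oplus r}$-part: the $q$-graded piece is $\bigwedge^{\,p-q}(U^\vee)^{\oplus r}\otimes\bigwedge^{q}U^{\oplus r}$ and carries $\mathbb{G}_m$-weight $d=-q$. Reading off the central character of $\GL_3$ on this piece gives $a+b+c=2q-p$, which is exactly condition (3); it remains to determine, for the fixed value $q=-d$, which dominant $(a,b,c)$ occur as highest weights.

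Next I show (1) and (2) are necessary. In the basis of \ref{roots} the weights of the $q$-graded piece are precisely the vectors $(\alpha_1-\beta_1,\alpha_2-\beta_2,\alpha_3-\beta_3)$ with $0\le\alpha_i,\beta_i\le r$, $\sum_i\alpha_i=q$ and $\sum_i\beta_i=p-q$, where $\alpha_i$ (resp. $\beta_i$) counts how many chosen basis vectors have weight $\lambda_i$ (resp. $-\lambda_i$). Since a highest weight is in particular a weight, if $F_{a,b,c,d}$ occurs then $(a,b,c)$ admits such a presentation with $q=-d$. From $\beta_i\ge 0$ and $\alpha_i\le r$ one gets $-r\le a,b,c\le r$, which with dominance is condition (1); from $\alpha_i\ge\max(0,w_i)$ and $\alpha_i\le r+\min(0,w_i)$ (the latter because $\beta_i=\alpha_i-w_i\le r$), summing over $i$ yields $a_++b_++c_+\le q\le 3r+a_-+b_-+c_-$, which is condition (2).

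For sufficiency I construct an explicit highest weight vector for every $(a,b,c,d)$ satisfying (1)--(3). Fix $q=-d$ and seek partitions $\mu=(\mu_1\ge\mu_2\ge\mu_3\ge 0)$ and $\nu=(\nu_1\ge\nu_2\ge\nu_3\ge 0)$, both with largest part $\le r$, with $|\mu|=q$, $|\nu|=p-q$, and $\mu=(a+\nu_3,b+\nu_2,c+\nu_1)$. Given such $\mu,\nu$, the standard column highest weight vectors, namely $\xi=(u_1^{(1)}\wedge\dots\wedge u_{h_1}^{(1)})\wedge\dots\wedge(u_1^{(\mu_1)}\wedge\dots\wedge u_{h_{\mu_1}}^{(\mu_1)})$ with $h_j=\#\{i:\mu_i\ge j\}$ built from the first $\mu_1$ copies of $U$, and the analogous $\eta$ built from $u_3^*,u_2^*,u_1^*$ in the first $\nu_1$ copies of $U^\vee$, are annihilated by the raising operators $E_{12},E_{23}$ and have weights $\mu$ and $(-\nu_3,-\nu_2,-\nu_1)$. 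As $\xi$ and $\eta$ involve disjoint basis vectors of $W$, the product $\xi\wedge\eta$ is nonzero, is again annihilated by $E_{12},E_{23}$ by the Leibniz rule, and has dominant weight $(a,b,c)$; hence it generates a copy of $F_{a,b,c,d}$, proving occurrence.

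The remaining point is to produce the partitions $\mu,\nu$ for each admissible $q$, and this elementary but delicate combinatorial step is where I expect the main obstacle. The requirement is that $\nu$ be a partition with $\nu_1\le r$ and that $\mu=(a+\nu_3,b+\nu_2,c+\nu_1)$ be again a partition with $\mu_1\le r$. The two extreme admissible values $q=a_++b_++c_+$ and $q=3r+a_-+b_-+c_-$ are realized by the explicit choices $\nu=(-c_-,-b_-,-a_-)$, giving $\mu=(a_+,b_+,c_+)$, and $\nu=(r-c_+,r-b_+,r-a_+)$, giving $\mu=(r+a_-,r+b_-,r+c_-)$, where condition (1) keeps all entries in $[0,r]$. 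These two configurations are ordered coordinatewise with both $\mu$ and $\nu$ partitions at each end, so every intermediate value of $q$ allowed by condition (2) is reached by adding single boxes, checking at each step that $\mu$ and $\nu$ stay partitions; since the defining inequalities are of interval and difference type, the sum functional takes every integer value in the stated range. Getting this box-by-box interpolation and the $(\cdot)_\pm$ bookkeeping exactly right is the part demanding care, whereas the representation-theoretic input (the grading, the weight count, and the column highest weight vectors) is routine.
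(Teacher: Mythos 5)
Your first two paragraphs are correct and coincide with the paper's own necessity argument: the identification of the two summands, the grading by $q=-d$, the central-character derivation of condition (3), and the $(\alpha_i,\beta_i)$-counting that gives (1) and (2) for any highest weight that occurs. The gap is in the sufficiency half, and it is not the "delicate bookkeeping" you anticipate --- the ansatz itself is too narrow. A wedge $\xi\wedge\eta$ of a highest weight vector of $\bigwedge^{q}U^{\oplus r}$ with a highest weight vector of $\bigwedge^{p-q}(U^\vee)^{\oplus r}$ can only ever produce the \emph{Cartan component} $\mu+(-\nu_3,-\nu_2,-\nu_1)$ of $F_\mu\otimes F_{(-\nu_3,-\nu_2,-\nu_1)}$; but highest weight vectors of a tensor product are in general not tensor (or wedge) products of highest weight vectors, and the non-Cartan constituents are genuinely needed here. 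Minimal counterexample: $r=1$, $(a,b,c,d)=(1,0,0,-2)$, hence $p=3$ and $q=2$. Conditions (1)--(3) hold, but the only partition $\nu$ with $|\nu|=p-q=1$ and $\nu_1\le r=1$ is $\nu=(1,0,0)$, which forces $\mu=(a+\nu_3,b+\nu_2,c+\nu_1)=(1,0,1)$, not a partition; so your construction produces nothing. Yet $F_{(1,0,0,-2)}$ does occur: the $q=2$ piece of $\bigwedge^3(U^\vee\oplus U)$ is $U^\vee\otimes\bigwedge^2U=F_{(1,1,-1,-2)}\oplus F_{(1,0,0,-2)}$ (Pieri), and an explicit highest weight vector is $u_2^*\wedge u_1\wedge u_2+u_3^*\wedge u_1\wedge u_3$, killed by $E_{12}$ and $E_{23}$ --- a sum of decomposables pairing each $u_i^*$ against the matching $u_i$, not of the form $\xi\wedge\eta$. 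The same example kills the closing interpolation claim ("the sum functional takes every integer value in the stated range"): here the achievable values of $|\nu|$ are $0$ and $2$ but not $1$, so the integer points of your polytope are not connected by single-box moves and their sums do skip values.

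The missing idea is the contraction. The one-dimensional representation $F_{(0,0,0,-1)}$ sits inside $\bigwedge^2(U^\vee\oplus U)$ as $\omega=\sum_i u_i^*\wedge u_i$ (the $\G_m$-twisted image of the canonical pairing $U^\vee\otimes U\to\C$), and highest weight vectors must be allowed to contain wedge factors of this type: in the counterexample the vector above is, up to sign, $u_1\wedge\omega$. This is exactly how the paper argues sufficiency: it realizes $(a,b,c,d)$ as the highest weight of $W_{a,b,c,d}=\big(\bigwedge^3F_{0,0,-1,0}\big)^{\otimes -a}\otimes\big(\bigwedge^2F_{0,0,-1,0}\big)^{\otimes a-b}\otimes\big(F_{0,0,-1,0}\big)^{\otimes b-c}\otimes\big(F_{0,0,0,-1}\big)^{\otimes -d}$ (in the case $a,b,c\le 0$, the other cases being symmetric), the factor $F_{0,0,0,-1}$ carrying the part of $-d$ that columns alone cannot. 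To repair your proof, enlarge the ansatz to wedges of three kinds of factors --- columns from the $U$-side, columns from the $U^\vee$-side, and copies of $\omega$ --- and redo the counting; be aware that nonvanishing of such wedges when factors share copies of $U$ or $U^\vee$ needs an argument (e.g.\ $u_1\wedge\omega\neq 0$ although both involve the same copy of $U$), a point on which the paper's own proof is itself quite terse. Alternatively, drop explicit vectors and compute the constituents of $\bigwedge^{p-q}(U^\vee)^{\oplus r}\otimes\bigwedge^{q}U^{\oplus r}$ by Pieri/Littlewood--Richardson; either route, unlike the pure bipartite Cartan construction, does reach every weight allowed by (1)--(3).
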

\begin{proof}
In the representation $F^{\oplus r}_{0,0,-1,0}\oplus F^{\oplus r}_{1,0,0,-1}$ we have an explicit basis of $6r$ elements that are eigenvectors for the action of the maximal torus. We deduce from them a basis $\mathcal{B}_p$ of $\bigwedge^p (F^{\oplus r}_{0,0,-1,0}\oplus F^{\oplus r}_{1,0,0,-1})$: each vector of $\mathcal{B}_p$  corresponds to the choice of $p$ between the $6r$ previous elements. Then each vector of $\mathcal{B}_p$ is also an eigenvector for the action of the torus, whose weight is the sum of the weights of the $p$ elements chosen. From this we deduce that $a,b,c,d$ is a weight for the action of the maximal torus on  $\bigwedge^p (F^{\oplus r}_{0,0,-1,0}\oplus F^{\oplus r}_{1,0,0,-1})$ if and only if the integers verify
\begin{enumerate}
\item $r \geq a,b, c\geq -r$,
\item $3r+a_-+b_-+c_-\geq -d\geq a_++b_++c_+$, and
\item $a+b+c+2d=-p$.
\end{enumerate}
The condition $ a\geq b \geq c$ corresponds to the choice of the Borel subgroup containing the maximal torus (as we did in \S\ref{roots}).

We need to show now that any $a,b,c,d$ verifying the condition in the statement is the maximal weight of a subrepresentation of $\bigwedge^p (F^{\oplus r}_{0,0,-1,0}\oplus F^{\oplus r}_{1,0,0,-1})$. Suppose $-d\leq p/2$ and $a,b,c$ negative (the other cases are analogous) and consider 
\[W_{a,b,c,d}=\big(\bigwedge^3F_{0,0,-1,0}\big)^{\otimes -a}\otimes \big(\bigwedge^2F_{0,0,-1,0}\big)^{\otimes a-b}\otimes \big(F_{0,0,-1,0}\big)^{\otimes b-c}\otimes \big(F_{0,0,0,-1}\big)^{\otimes -d}.\]
The maximal weight of $W_{a,b,c,d}$ is $a,b,c,d$. To show that $W_{a,b,c,d}$ is a subrepresentation of $\bigwedge^p (F^{\oplus r}_{0,0,-1,0}\oplus F^{\oplus r}_{1,0,0,-1})$ it is enough to show that $F_{0,0,0,-1}$ is a subrepresentation of $F_{0,0,-1,0}\oplus F_{1,0,0,-1}$. This is indeed the case: the action canonical paring of the standard representation of $\GL_3$ with its dual induces a non-zero morphism $F_{0,0,-1,0}\oplus F_{1,0,0,-1}\longrightarrow F_{0,0,0,-1}.$
\end{proof}
\begin{npar}{Proof of Theorem \ref{peso abeliano}} 
First of all, we have the following identification
\[R^pf_*\Q_{A^r}=\bigwedge^p (F^{\oplus r}_{0,0,-1,0}\oplus F^{\oplus r}_{1,0,0,-1});\]
see also Remark \ref{rem mu} and Remark \ref{remfinal}.

We deduce from Theorem \ref{thm final} and Lemma \ref{rappresentazioni} that
\begin{enumerate}
\item the $\Q$-Hodge structure  $i^*R^0j_* R^pf_*\Q_{A^r}$ has weight $p -(a-c)$,
\item the $\Q$-Hodge structure $i^*R^1j_* R^pf_*\Q_{A^r}$ has weights  $p+1-(b-c)$ and ${p+1-(a-b)}$,
\item the $\Q$-Hodge structure $i^*R^2j_* R^pf_*\Q_{A^r}$ has weights  $p+3+(b-c)$ and ${p+3+(a-b)}$,
\item the $\Q$-Hodge structure $i^*R^3j_* R^pf_*\Q_{A^r}$ has weight  $p+4+(a-c)$,
\item the $\Q$-Hodge structure $i^*R^kj_* R^pf_*\Q_{A^r}$ vanishes for $k\geq 4$.
\end{enumerate}
with $a,b,c$ varying between the numbers satisfying conditions in Lemma \ref{rappresentazioni}.

First of all, note that the map
$(a,b,c,d)\mapsto (-c,-b,-a,-3r-d)$
gives a bijection between the values of $(a,b,c,d)$ appearing in the list of irreducible subrepresentations of $\bigwedge^p (F^{\oplus r}_{0,0,-1,0}\oplus F^{\oplus r}_{1,0,0,-1})$ and the values appearing in ${\bigwedge^{6r-p} (F^{\oplus r}_{0,0,-1,0}\oplus F^{\oplus r}_{1,0,0,-1})}$, so that we can suppose $p\leq 3r$.

Note now that $a-b, b-c$ and $a-c$ are non-negative integers bounded by $2r$ and $p$.  Let us start by studying the possible values $t\leq p$ of $a-b$ (the case $b-c$ is similar). If $t\leq r$, then the triple $(a,b,c)=(t,0,0)$ (or $(a,b,c)=(t,0,-1)$, depending on the parity of $t$) verifies the conditions in Lemma \ref{rappresentazioni} and gives $a-b=t$. 

If $r<t\leq M_p,$ then $(a,b,c)=(r,r-t,r-t)$ (or $(a,b,c)=(r,r-t,r-t-1)$, depending on the parity of $t$) verifies the conditions in Lemma \ref{rappresentazioni} and gives $a-b=t$; it is also clear from this description that $a-b$ cannot take values bigger than $M_p$.

Let us now consider the possible values $t\leq p$ of $a-c$. If $0<t\leq r$, then the triple $(a,b,c)=(t,0,0)$ (or $(a,b,c)=(t,1,0)$, depending on the parity of $t$) verifies the conditions in Lemma \ref{rappresentazioni} and gives $a-c=t$. 

If $t=0$, then the triple $(a,b,c)=(0,0,0)$ if $p$ is even, or $(a,b,c)=(1,1,1)$ if $p$ is odd and at least $3$ verifies the conditions in Lemma \ref{rappresentazioni} and gives $a-c=0$; this description shows also that if $p=1$ then $a-c$ cannot be $0$.

If $r<t\leq C_p,$ then $(a,b,c)=(r,0,r-t)$ (or $(a,b,c)=(r,1,r-t)$, depending on the parity of $t$) verifies the conditions in Lemma \ref{rappresentazioni} and gives $a-c=t$. 
\end{npar}

\begin{npar}{Proof of Corollary \ref{cor intro}}\label{finito}
Let us show that $w=-a-b-c-2d.$
This equality and Theorem \ref{thm final} clearly imply the statement.

Let us consider the morphism $p:\mathbb{G}_{m,\C}\longrightarrow \D_{\C}$
given by
$z\mapsto (z,z),$ the morphism $h_{v_1,v_2,v_3}$ of Definition \ref{datum} and the morphism $\phi_{\mathcal{B},\C}$ of Remark \ref{extension}. The morphism 
$\phi_{\mathcal{B},\C} \circ h_{v_1,v_2,v_3} \circ p: \mathbb{G}_{m,\C}\longrightarrow \GL_{3,\C}\times \mathbb{G}_{m,\C}$ is then given by $z \mapsto (z \cdot \Id, z^2).$
In particular the action of $z \in \mathbb{G}_{m,\C}(\C)$ on $F_{(a,b,c,d)}$ via $\phi_{\mathcal{B},\C} \circ h_{v_1,v_2,v_3} \circ p$ is the multiplication by $z^{a+b+c+2d}.$ By the very definition of the canonical construction \cite[1.18]{Pi}, this means that $w=-a-b-c-2d$ (the sign comes from the convention \cite[1.3]{Pi}).

\end{npar}

\begin{npar}{Acknowledgments} I thank Ishai Dan-Cohen, Javier Fres\'an, Marco Maculan, Siddarth Sankaran and Jörg Wildeshaus for helpful comments. I also thank the anonymous referee for helpful suggestions. I am grateful for the hospitality of the Hausdorff Research Institute for Mathematics, Bonn.
\end{npar}

\bibliographystyle{alpha}
\selectlanguage{english}

\end{document}